\title[Degree bounds for separating invariants]{Degree bounds for separating invariants}
\author{Martin Kohls and Hanspeter Kraft}
\date{June 22, 2010}
\address{Zentrum Mathematik - M11, Technische Universit\"at M\"unchen,
Bolzmannstrasse 3, D-85748 Garching, Germany}
\email{kohls@ma.tum.de}
\address{Mathematisches Institut,
Universit\"at Basel, Rheinsprung 21, CH-4051 Basel, Switzerland}
\email{Hanspeter.Kraft@unibas.ch}
\newtheorem{thm}{Theorem}
\newtheorem*{thm*}{Theorem}
\newtheorem*{thA}{Theorem A}
\newtheorem*{thB}{Theorem B}
\newtheorem*{thC}{Theorem C}
\newtheorem*{conj*}{Conjecture}
\newtheorem{ques}{Question}
\newtheorem*{prob*}{Problem}
\newtheorem*{satz*}{Satz}
\newtheorem{prop}{Proposition}
\newtheorem{lem}{Lemma}
\newtheorem{cor}{Corollary}
\newtheorem*{cor*}{Corollary}
\theoremstyle{definition}
\newtheorem{defn}{Definition}
\newtheorem{exa}{Example}
\theoremstyle{remark}
\newtheorem*{rem*}{Remark}
\newtheorem{rem}{Remark}
\newcommand{\op}{\operatorname}
\def\Magma{{\sc Magma }}
\newcommand{\name}[1]{\textsc{#1\/}}
\newcommand{\NN}{{\mathbb N}}
\newcommand{\ZZ}{{\mathbb Z}}
\def\reg{\operatorname{reg}}
\newcommand{\simto}{\xrightarrow{\sim}}
\newcommand{\be}{\begin{enumerate}}
\newcommand{\ee}{\end{enumerate}}
\newcommand{\Id}{\op{Id}}
\newcommand{\SLtwo}{{\op{SL}_2(K)}}
\newcommand{\SL}{\op{SL}}
\newcommand{\GL}{\op{GL}}
\newcommand{\Hom}{\op{Hom}}
\newcommand{\Ind}{\op{Ind}}
\newcommand{\Char}{\op{char}}
\def\ord{\operatorname{ord}}
\newcommand{\OOO}{\op{\mathcal O}}
\newcommand{\quot}{/\!\!/}
\newcommand{\bsep}{{\beta_{\text{{\rm sep}}}}}
\DeclareMathOperator{\Spec}{Spec}
\DeclareMathOperator{\Ss}{\mathcal S}
\renewcommand{\phi}{\varphi}
\def \itt #1,#2:{\medskip\item[$\bullet$] %
     page\ \ignorespaces#1, line\ \ignorespaces#2:\ \ignorespaces}
\newcommand{\lab}[1]{\label{#1}}%\marginpar{{\tiny\color{blue} #1}}}
\begin{document}

\begin{abstract}
If $V$ is a representation of a linear algebraic group $G$, a set $S$ of
$G$-invariant regular functions on $V$ is called {\it separating\/} if the
following holds: {\it If two elements $v,v'\in V$ can be separated by an invariant function, then there is an $f\in S$ such that $f(v)\neq f(v')$.} It is known that there always exist  finite separating sets. Moreover, if the group $G$ is finite, then the invariant functions of degree $\leq |G|$ form a separating set. We show that for a non-finite linear algebraic group $G$ such an upper bound for the degrees of a separating set does not exist.

If $G$ is finite, we define $\bsep(G)$ to be the minimal number $d$ such that for every $G$-module $V$ there is a separating set of degree $\leq d$. We show that for a subgroup $H \subset G$ we have $\bsep(H) \leq \bsep(G)\leq [G:H] \cdot\bsep(H)$, and that $\bsep(G)\leq \bsep(G/H) \cdot \bsep(H)$ in case $H$ is normal. Moreover, we calculate $\bsep(G)$ for some specific finite groups.
\end{abstract}

\maketitle

\vskip1cm
%%%%%%%%%%%%%%%%%%%%%%%%%%%%%%%%%%%%%%%
\section{Introduction}
Let $K$ be an algebraically closed field of arbitrary characteristic. Let $G$ be a linear algebraic group and $X$ a $G$-variety, i.e. an affine variety equipped with a (regular) action of $G$, everything defined over $K$. We denote by $\OOO(X)$ the coordinate ring of $X$ and by $\OOO(X)^{G}$ the subring of $G$-invariant regular functions. The following definition is due to \name{Derksen} and \name{Kemper} \cite[Definition 2.3.8]{Derksen2002Computational-i}.

\begin{defn}\lab{Def1}
Let $X$ be a $G$-variety. A subset $S\subset \OOO(X)^{G}$ of the invariant ring of $X$ is called {\it separating\/} (or {\it $G$-separating\/}) if the following holds:
\par
{\it For any pair $x,x' \in X$, if $f(x)\neq f(x')$ for some $f\in\OOO(X)^{G}$ then there is an $h\in S$ such that $h(x)\neq h(x')$.}
\end{defn}

It is known and easy to see that there always exists a finite separating set (see 
\cite[Theorem 2.3.15]{Derksen2002Computational-i}). 

If $V$ is a {\it $G$-module}, i.e. a finite dimensional $K$-vector space with
a regular linear action of $G$, we would like to know a priory bounds for the
degrees of the elements in a separating set. We denote by $\OOO(V)_{d}\subset
\OOO(V)$ the homogeneous functions of degree $d$ (and the zero function), and put $\OOO(V)_{\leq d}:=\bigoplus_{i=0}^{d}\OOO(V)_{i}$.

\begin{defn} For a $G$-module $V$ define
$$
\bsep(G,V) := \min\{d \mid \OOO(V)_{\leq d}\text{ is $G$-separating}\}\in\NN,
$$
and set
$$
\bsep(G):=\sup\{\bsep(G,V) \mid \text{$V$ a $G$-module}\}\in\NN\cup\{\infty\}.
$$
\end{defn}

The main results of this note are the following.
\begin{thA}
The group $G$ is finite if and only if $\bsep(G)$ is finite.
\end{thA}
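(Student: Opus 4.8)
The plan is to prove the two implications of Theorem A separately, and the real content lies in showing that $\bsep(G) = \infty$ whenever $G$ is not finite.

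\medskip

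\textbf{The easy direction.} First I would dispose of the implication ``$G$ finite $\Rightarrow$ $\bsep(G)$ finite''. This is essentially the classical result quoted in the abstract: if $G$ is a finite group, then for any $G$-module $V$ the invariants of degree $\leq |G|$ generate $\OOO(V)^G$ as a module over (a Noether normalization, or simply) already separate points, by a Noether-type argument. Concretely, for any $v, v' \in V$ lying in distinct orbits (equivalently, separated by some invariant), the polarized power sums or the elementary symmetric functions in the orbit coordinates give invariants of degree $\leq |G|$ that take different values on $v$ and $v'$. Hence $\OOO(V)_{\leq |G|}$ is separating for every $V$, giving $\bsep(G) \leq |G| < \infty$. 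I would state this with a precise reference (the Derksen--Kemper bound) rather than reprove it.

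\medskip

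\textbf{The hard direction.} The substance is: if $G$ is an infinite linear algebraic group, then $\bsep(G) = \infty$, i.e.\ for every bound $d$ there exists a $G$-module $V$ requiring separating invariants of degree $> d$. My strategy is to exhibit, for each $d$, a module $V_d$ on which no set of invariants of degree $\leq d$ can separate two specific points that \emph{are} separated by some invariant. The natural place to find such examples is inside the unipotent radical or a one-parameter subgroup, so I would first reduce to the case of $\Cst = \GL_1$ (or $\mathbb{G}_a$) acting, using that an infinite algebraic group contains either a copy of $\GL_1$ or of $\mathbb{G}_a$ (in positive characteristic one must take care, but $G$ infinite forces $\dim G \geq 1$, hence $G$ contains a one-dimensional closed subgroup $H$ isomorphic to $\mathbb{G}_a$ or $\mathbb{G}_m$). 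By the subgroup inequality $\bsep(H) \leq \bsep(G)$ announced in the abstract, it suffices to prove $\bsep(H) = \infty$ for these two one-dimensional groups.

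\medskip

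For $H = \Cst$ I would take $V_d$ to be a sum of weight spaces with weights chosen so that the only invariant monomials have large degree. For instance, with weights $1$ and $-(d{+}1)$ on coordinates $x,y$, the algebra of invariants is generated by $x^{d+1}y$, which has degree $d{+}2 > d$; one then checks that two points which are separated by $x^{d+1}y$ cannot be separated by any invariant of degree $\leq d$, since all invariants of bounded degree vanish identically or are constant on the relevant pair. For $H = \mathbb{G}_a$ the analogous construction uses an indecomposable representation (a Jordan block) of size growing with $d$, where the lowest-degree nonconstant invariant has degree tending to infinity with the block size. I expect the main obstacle to be the $\mathbb{G}_a$ case in positive characteristic, where the invariant theory of a single Jordan block is subtle and the degrees of generators behave differently from characteristic zero; here I would either invoke known computations of these invariant rings or construct an explicit pair of points and an explicit low-degree-vanishing argument tailored to the chosen representation, verifying separation by an honest high-degree invariant while ruling out all low-degree ones by a direct weight/degree count.
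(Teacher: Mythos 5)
Your overall architecture (reduce to a one\hyphenation{dimensional}-dimensional subgroup, then kill $\mathbb{G}_m$ and $\mathbb{G}_a$ separately) resembles the paper's, but the reduction step you lean on is a genuine gap. You invoke ``the subgroup inequality $\bsep(H)\leq\bsep(G)$'' for an arbitrary one-dimensional closed subgroup $H$ of an arbitrary infinite $G$. The paper proves this inequality only for subgroups of \emph{finite index} (via induced modules, Corollary~\ref{IndMod.prop}) and, in a restricted form, for \emph{reductive} subgroups of \emph{reductive} groups (Proposition~\ref{subgroup.prop}, which moreover only bounds $\bsep(H,V')$ for $H$-modules $V'$ that embed into a $G$-module --- this is exactly why the semisimple case needs the Weyl-module construction $(D^{\lambda})^{*}\oplus D^{n\lambda}\subset (E^{\lambda})^{*}\oplus E^{n\lambda}$ rather than an arbitrary two-weight $K^{*}$-module). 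No such inequality is available for, say, a $\mathbb{G}_a$ inside a general unipotent or non-reductive group: an $H$-module need not extend to a $G$-module, restriction of invariants need not be surjective, and the paper's own remark after Proposition~\ref{GHseparating.prop} illustrates how badly separating sets behave under such comparisons. The paper circumvents this by a case split on the connected group: if $G$ has a nontrivial character it maps \emph{onto} $K^{*}$; if not, it either maps onto a semisimple group or is unipotent and maps onto $K^{+}$ --- in each case one pulls modules back along a surjection, not up from a subgroup.

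The $\mathbb{G}_a$ half also has concrete problems. In characteristic $0$ a single Jordan block $V_n=S^nV$ always carries a \emph{linear} invariant (the coordinate of the fixed line), so your claim that the lowest-degree nonconstant invariant grows with the block size is false; the paper must pass to $W=V^{*}\oplus V_n$, pick the specific pair $w=(x_0,v_0)$, $w'=(x_0,0)$, and use the isomorphism $\OOO(U_1^{*}\oplus U_2)_{(p,q)}^{G}\simeq\Hom_G(S^qU_2,S^pU_1)$ to force degree $\geq n+1$ on any invariant separating that pair. In characteristic $p$ a Jordan block of size $>p$ is not even a rational representation of $\mathbb{G}_a$ (one cannot exponentiate), so ``Jordan blocks of growing size'' is not a construction you can make; the paper instead uses the Frobenius twists $V\oplus V_{F^n}$ and computes the invariant ring $K[x_1,y_1,x_0^{p^n}y_1-x_1^{p^n}y_0]$ explicitly. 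The $\mathbb{G}_m$ computation and the finite case are fine, but as written the proposal does not close either the reduction or the additive-group case.
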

In order to prove this we will show that $\bsep(K^{+}) = \infty$, that $\bsep(K^{*}) = \infty$, that $\bsep(G)=\infty$ for every semisimple group $G$, and that $\bsep(G^{0})\leq \bsep(G)$ (see section~\ref{alggroups.sec}, Theorem~\ref{alggroup.thm}).

\begin{thB}\lab{finitegroups.thmB}
Let $G$ be a finite group and  $H \subset G$ a subgroup. Then
$$
\bsep(H) \leq \bsep(G) \leq [G:H] \, \bsep(H), \text{ and so }  \bsep(G) \leq |G|.
$$
Moreover, if $H \subset G$ is normal, then
$$
\bsep(G) \leq \bsep(G/H)\, \bsep(H).
$$
\end{thB}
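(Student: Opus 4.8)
My plan is to prove the three inequalities separately, each by transporting a separating set between $V$ and an auxiliary module along an equivariant morphism, and then to read off $\bsep(G)\le|G|$ as the special case $H=\{e\}$ of the upper bound, using $\bsep(\{e\})=1$ (on any nonzero module the degree-one coordinate functions already separate all points). Throughout I will use the basic fact that for a \emph{finite} group two points of a module are distinguished by some invariant precisely when they lie in different orbits, so ``separating'' means ``separating orbits''.

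For the lower bound $\bsep(H)\le\bsep(G)$ I would induce. Given an $H$-module $V$, set $W:=\Ind_H^G V$; choosing right coset representatives $G=\bigsqcup_{j=1}^{n}Hg_j$ (with $g_1=e$, $n=[G:H]$) identifies $W\cong V^{n}$ as a vector space, with $G$ acting by the \emph{monomial} action $\sigma\cdot(v_1,\dots,v_n)=(h_1(\sigma)v_{\pi_\sigma(1)},\dots,h_n(\sigma)v_{\pi_\sigma(n)})$, where $g_j\sigma=h_j(\sigma)\,g_{\pi_\sigma(j)}$. The inclusion $i\colon V\hookrightarrow W$, $v\mapsto(v,0,\dots,0)$, is $H$-equivariant, so the comorphism $i^{*}\colon\OOO(W)\to\OOO(V)$ is degree-preserving and sends $\OOO(W)^G\subset\OOO(W)^H$ into $\OOO(V)^H$. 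The key point is that $i$ carries distinct $H$-orbits to distinct $G$-orbits: if $\sigma\cdot(x,0,\dots,0)=(x',0,\dots,0)$, the monomial formula forces the unique nonzero slot to remain in position $1$, whence $\sigma\in H$ and $x'\in Hx$. Pulling back a $G$-separating set on $W$ of degree $\le\bsep(G)$ then yields an $H$-separating set on $V$ of the same degree.

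For the upper bound $\bsep(G)\le[G:H]\,\bsep(H)$ (where $H$ need \emph{not} be normal, which is exactly why no quotient argument is available) I would build $G$-invariants from a finite $H$-separating set $S_H\subset\OOO(V)^H$ of degree $\le\bsep(H)$ by averaging symmetric functions over cosets. With $n=[G:H]$, and $\ell=\sum_{f\in S_H}c_f f$ a linear combination with indeterminate coefficients, the coefficients (in the $c_f$ and an auxiliary $T$) of $\prod_{j=1}^{n}\bigl(T-\ell(g_j x)\bigr)$ are $G$-invariant functions of $x$; since $f(g_jx)=(g_j^{-1}f)(x)$ has degree $\deg f\le\bsep(H)$ and each such coefficient is a sum of products of at most $n$ of these, every resulting invariant has degree $\le[G:H]\,\bsep(H)$. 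If they all agree on $x$ and $x'$, then for every $\ell$ the multisets $\{\ell(g_jx)\}_j$ and $\{\ell(g_jx')\}_j$ coincide (elementary symmetric functions determine a multiset in any characteristic), hence the vector multisets $\{(f(g_jx))_{f}\}_j$ and $\{(f(g_jx'))_{f}\}_j$ coincide; as $S_H$ separates $H$-orbits this matches the families $\{Hg_jx\}_j=\{Hg_jx'\}_j$, and taking unions gives $Gx=Gx'$. I expect the genuine obstacle to sit here: a single product $\prod_j(g_j\!\cdot\! f)$ fails to separate, and one must capture the full \emph{joint} multiset of $H$-fingerprints rather than the per-function ones, with a degree bound robust in small characteristic — the generic-linear-form together with elementary-symmetric device is what makes both the separation and the bound work.

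For the normal case $H\trianglelefteq G$ I would run a tower argument. Now $Q:=G/H$ acts on $\OOO(V)^H$, so I may take the finite-dimensional graded subspace $M=\bigoplus_{d=0}^{\bsep(H)}\OOO(V)_d^{H}$, which is $G$-stable (as $H$ is normal) and $H$-separating. Evaluation gives a $G$-equivariant map $\Phi\colon V\to U:=\bigoplus_d M_d^{*}$, with $G$ acting on $U$ through $Q$ since $H$ acts trivially on $M$; it descends to a $Q$-equivariant injection $V/\!\!/H\hookrightarrow U$, so that $Q$-orbits in $U$ pull back exactly to $G$-orbits in $V$. Choosing a $Q$-separating set $S_Q\subset\OOO(U)^Q$ of degree $\le\bsep(G/H)$ and pulling back along $\Phi$ gives a $G$-separating set $\Phi^{*}(S_Q)\subset\OOO(V)^G$. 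The one delicate step is the degree count: a linear coordinate of $U$ from the slot $M_d$ pulls back into $\OOO(V)_d$, so $\Phi^{*}$ is graded only for the \emph{weighting} assigning weight $d\le\bsep(H)$ to $M_d$; a standard-degree-$e$ element of $\OOO(U)$ therefore pulls back to $\OOO(V)$-degree $\le e\cdot\bsep(H)$, and with $e\le\bsep(G/H)$ this yields degree $\le\bsep(G/H)\,\bsep(H)$, as required.
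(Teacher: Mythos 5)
Your argument is correct and follows essentially the same route as the paper: the lower bound via restriction from the induced module $\Ind_H^G V$, the upper bound via coset symmetrization with polarized elementary symmetric functions (the coefficients of your $\prod_j\bigl(T-\ell(g_jx)\bigr)$ in $T$ and the $c_f$ are exactly the Draisma--Kemper--Wehlau separating set the paper invokes in Lemma~\ref{sepsym.lem}), and the normal case by composing a $G$-equivariant $H$-separating morphism into a $G/H$-module with a $G/H$-separating morphism and multiplying degrees. The only cosmetic difference is that you justify the induced-module step by the orbit-separation characterization of separating sets for finite groups, whereas the paper exhibits an explicit transfer-type preimage to show that restriction is surjective on invariants and then applies Proposition~\ref{GHseparating.prop}.
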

This will be done in section~\ref{reldegreebounds.sec} where we formulate and prove a more precise statement (Theorem~\ref{finitegroups.thm}).

Finally, we have the following explicit results for finite groups.
\begin{thC}
\be
\item Let $\Char K=2$. Then $\bsep(S_{3})=4$.
\item  Let $\Char K=p>0$ and let $G$ be a finite $p$-group. Then $\bsep(G)=|G|$.
\item Let $G$ be a finite cyclic group. Then $\bsep(G)=|G|$.
\item Assume $\Char(K)=p$ is odd, and $r\ge 1$. Then $\bsep(D_{2p^{r}})=2p^{r}$.
\ee
\end{thC}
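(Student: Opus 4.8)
Since parts (b), (c) and (d) all assert $\bsep(G)=|G|$, and Theorem B already supplies the upper bound $\bsep(G)\le |G|$, the entire content of these three items is the reverse inequality $\bsep(G)\ge |G|$. My plan is to obtain all three lower bounds by a single uniform device: the regular representation together with a well-chosen semi-invariant point. Write $N=|G|$ and let $R=K[G]$ be the regular representation, on which $G$ permutes the coordinate functions $\{x_g\}_{g\in G}$ freely and transitively; fix a linear character $\chi\colon G\to K^{*}$ and set $v_\chi:=\sum_{g}\chi(g)\,e_g\in R$. Then $v_\chi$ is a $G$-semi-invariant (an eigenvector for every $h\in G$), so that $x_g(v_\chi)=\chi(g)$ and $(h\cdot\mu)(v_\chi)=\chi(h)^{\deg\mu}\mu(v_\chi)$ for every monomial $\mu$. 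In particular $\mu(v_\chi)=\prod_g\chi(g)^{a_g}\ne 0$ for all $\mu$, and $\prod_{g}x_g$ is a $G$-invariant of degree $N$ with $\bigl(\prod_g x_g\bigr)(v_\chi)=\prod_g\chi(g)\ne 0$.

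The crux is that, for the right $\chi$, every homogeneous invariant of degree $d$ with $0<d<N$ vanishes at $v_\chi$. Since $G$ permutes the monomials, the invariants of each degree are spanned by the orbit sums $O_\mu=\sum_{\text{orbit}}h\cdot\mu$, so it suffices to prove $O_\mu(v_\chi)=0$ for $0<\deg\mu<N$. Summing over the whole group gives $\sum_{h\in G}(h\cdot\mu)(v_\chi)=\mu(v_\chi)\sum_h\chi(h)^{d}$, and $\sum_h\chi(h)^{d}$ equals $N$ if $\chi^{d}$ is trivial and $0$ otherwise. As $\sum_{h}(h\cdot\mu)=|\mathrm{Stab}(\mu)|\,O_\mu$, I get $|\mathrm{Stab}(\mu)|\,O_\mu(v_\chi)=\mu(v_\chi)\sum_h\chi(h)^d$. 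When $p\mid N$ the right-hand side vanishes, so $O_\mu(v_\chi)=0$ whenever $p\nmid|\mathrm{Stab}(\mu)|$. For $p$-groups (item (b)) I take $\chi$ trivial, so $v_\chi$ is the all-ones vector: every orbit has $p$-power size, which is $>1$ unless $\mu$ is $G$-fixed, i.e. a power of $\prod_g x_g$ of degree divisible by $N$; hence for $0<d<N$ the orbit size is divisible by $p$ and $O_\mu(v_\chi)=0$ at once. For $C_n$ (item (c)) I take $\chi$ of order equal to the $p'$-part $m$ of $n$ (a faithful character of order $n$ when $p\nmid n$), so $v_\chi$ is an eigenvector; for $D_{2p^{r}}$ (item (d)) I take $\chi=\eps$ the sign character.

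The remaining work — and the main technical obstacle in (c) and (d) — is the case $p\mid|\mathrm{Stab}(\mu)|$, where one cannot divide by $|\mathrm{Stab}(\mu)|$. Here I argue structurally: freeness of the permutation action forces a monomial fixed by a $p$-element to be supported evenly along $\langle\sigma\rangle$-cosets, and a short bookkeeping shows that in each instance either the orbit size is still divisible by $p$, or the only surviving monomials have degree $\ge N$ (for example $\prod_{g\in C_{p^{r}}}x_g$ has the \emph{odd} degree $p^{r}$, which is excluded in the even-degree branch), or the relevant sum $\sum_{h\in G/\mathrm{Stab}(\mu)}\chi(h)$ of a balanced nontrivial character vanishes. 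In all cases $O_\mu(v_\chi)=0$ for $0<\deg\mu<N$. Consequently $v_\chi$ and $0$ are separated by $\prod_g x_g$ but by no invariant of degree $<N$, which yields $\bsep(G,R)\ge N$ and hence $\bsep(G)=N$, settling (b), (c), (d) simultaneously.

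Item (a) is genuinely different, since $\bsep(S_3)=4$ is strictly less than $|S_3|=6$ and Theorem B only gives $\bsep(S_3)\le 6$. In characteristic $2$ the group $S_3\cong\GL_2(\FF_2)$ has just two irreducible modules, the trivial one and the natural module $W=K^2$, whose invariant ring is the Dickson algebra $\OOO(W)^{S_3}=K[c_2,c_3]$ with generators in degrees $2$ and $3$. For the lower bound $\bsep(S_3)\ge 4$ I would study vector invariants, most plausibly on $V=W\oplus W$, exhibiting two points separated by a degree-$4$ invariant but by nothing of smaller degree; for the upper bound $\bsep(S_3)\le 4$ I would exploit the normal subgroup $C_3\lhd S_3$ (linearly reductive since $\Char K=2$) to reduce an arbitrary module to its $C_3$-invariants and then pass to $C_2$-invariants, tracking degrees to show that $4$ always suffices. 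I expect this to be the hardest part: unlike (b)--(d) there is no clean group-theoretic bound forcing the value, the answer hinges on the modular representation theory of $S_3$, and pinning the exact constant $4$ (as opposed to $3$ or $5$) will likely require an explicit computation — consistent with the computational tools the paper prepares.
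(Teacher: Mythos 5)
Your proposal is correct in substance for (b)--(d), but it follows a genuinely different (and more uniform) route than the paper for (c) and (d). The paper also works with the regular representation and the all-ones vector for $p$-groups, so there you coincide; but for cyclic groups it builds a small $p^{k}$-dimensional module on which a generator of the $p'$-part acts by a root of unity $\zeta$ and the $p$-part permutes the basis cyclically, and for $D_{2p^{r}}$ a $p^{r}$-dimensional module with $\rho v_{i}=v_{i+1}$ and $\sigma v_{i}=-v_{-i}$; in each case the test point is the plain sum of the basis vectors and the twist sits in the \emph{module}. You instead keep one module for all three cases --- the regular representation --- and put the twist into the \emph{point} $v_{\chi}=\sum_{g}\chi(g)e_{g}$. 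Your scheme does work, and the ``technical obstacle'' you flag (when $p$ divides $|\mathrm{Stab}(\mu)|$) dissolves if you sum $\chi^{d}$ directly over coset representatives of the stabilizer rather than over all of $G$: since $\mu(v_{\chi})\neq 0$ forces $\chi^{d}|_{\mathrm{Stab}(\mu)}=1$, one gets $O_{\mu}(v_{\chi})=\mu(v_{\chi})\sum_{\bar h\in G/\mathrm{Stab}(\mu)}\chi(\bar h)^{d}$ with no division; this vanishes unless $\chi^{d}$ is trivial on $G$ \emph{and} $p\nmid[G:\mathrm{Stab}(\mu)]$, and in that surviving case the (unique, normal) Sylow $p$-subgroup lies in the stabilizer, so freeness of the regular action forces $p^{k}\mid d$ while triviality of $\chi^{d}$ forces $m\mid d$ (resp.\ $2\mid d$ for the sign character), giving $d\geq|G|$. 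What the paper's choice buys is smaller, more explicit modules and a cleaner two-line endgame (an invariant monomial must be a power of $x_{1}\cdots x_{q}$); what yours buys is a single construction covering (b), (c), (d) simultaneously, and it produces the same byproduct the paper exploits for its Hilbert-ideal corollary (a nonzero point annihilated by all homogeneous invariants of positive degree $<|G|$). For (a) you have not given a proof, but neither does the paper beyond citing a \Magma\ computation of $\bsep(S_{3},V_{\reg})$ via Proposition~\ref{betasepExplicit}; your assessment that the exact value $4$ requires explicit computation matches what the paper actually does, though your sketched reduction via $C_{3}\lhd S_{3}$ would at best give the upper bound $\bsep(S_{3})\le\bsep(C_{2})\bsep(C_{3})=6$ from Theorem~B and cannot by itself pin down $4$.
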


For a reductive group  $G$ one knows that the condition $f(x)\neq f(x')$ for some invariant $f$ (in Definition~\ref{Def1}) is equivalent to the condition  $\overline{Gx} \cap \overline{Gx'}=\emptyset$, see \cite[Corollary 3.5.2]{Newstead}. This gives rise to the following definition.

\begin{defn}
Let $X$ be a $G$-variety. A $G$-invariant morphism $\phi\colon X \to Y$ where $Y$ is an affine variety is called  {\it separating\/} (or {\it $G$-separating\/}) if the following condition holds: {\it For any pair $x,x'\in X$ such that  $\overline{Gx} \cap \overline{Gx'}=\emptyset$ we have $\phi(x)\neq \phi(x')$.}
\end{defn}

\begin{rem}\lab{rem1}
If $\phi\colon X \to Y$ is $G$-separating and $X' \subset X$ a closed $G$-stable subvariety, then the induced morphism $\phi|_{X'}\colon X' \to Y$ is also $G$-separating.
\end{rem}

\begin{rem} 
Choose a closed embedding $Y\subset K^{m}$ and denote by $\phi_{1},\ldots,\phi_{m} \in \OOO(X)$ the coordinate functions of $\phi\colon X \to Y \subset K^{m}$. {\it If $\phi$ is separating, then $\{\phi_{1},\ldots,\phi_{m}\}$ is a separating set.} The converse holds if $G$ is reductive, but not in general, as shown by the standard representation $K^{+} \to \GL_{2}(K)$ given by $s\mapsto \begin{bmatrix} 1& s\\ &1 \end{bmatrix}$ which does not admit a separating morphism.
\end{rem}

%\marginpar{Ich finde dass sollte erwaehnt werden}

\vskip1cm
%%%%%%%%%%%%%%%%%%%%%%%%%%%%%%%%%%%%%%%
\section{Some useful results}\lab{knownresults.sec}
We want to recall some facts about the $\bsep$-values, and compare with the results for the classical $\beta$-values for generating invariants introduced by \name{Schmid} \cite{Schmid1991Finite-groups-a}: 
{\it $\beta(G)$ is the minimal $d\in \NN$ such that,  for every $G$-module $V$, the invariant ring $\OOO(V)^{G}$ is generated by the invariants of degree $\leq d$.}

By \name{Derksen}  and \name{Kemper} 
\cite[Corollary 3.9.14]{Derksen2002Computational-i}, we have $\bsep(G)\le |G|$. This
is in perfect analogy to the Noether bound, which says $\beta(G)\le |G|$ in
the non-modular case, see \cite{Schmid1991Finite-groups-a, FleischmannNoether, FogartyNoether}.
Of course we have $\bsep(G)\le \beta(G)$, so when good upper bounds for
$\beta(G)$ are known, then we have an upper bound for $\bsep(G)$. 

In characteristic zero and in the non-modular case there are
the bounds by \name{Schmid} \cite{Schmid1991Finite-groups-a} and by \name{Domokos}, \name{Heged\"us}, and \name{Sezer}
\cite{DomokosHegedus,SezerNoether} which improve the Noether bound. In particular, \cite{SezerNoether} shows
for non-modular non-cyclic groups $G$ that  $\beta(G)\le \frac{3}{4}|G|$. 

For a linear algebraic group $G$ it is shown by \name{Bryant}, \name{Derksen}  and \name{Kemper}  \cite{Bryant,DerksenKemperGlobal} that 
$\beta(G)<\infty $ if and only if $G$ is finite  and  $p \nmid |G|$ which is
the analogon to our Theorem~A. For further results on degree bounds, we
recommend the overview article of \name{Wehlau} \cite{WehlauNoether}. 

\par\smallskip
The following results will be useful in the sequel.
\begin{prop}\lab{GHseparating.prop}
Let $H \subset G$ be a closed subgroup, $X$ an affine $G$-variety and $Z$ an affine $H$-variety. Let $\iota\colon Z \to X$ be an $H$-equivariant morphism and assume that $\iota^{*}$ induces a surjection $\OOO(X)^{G}\twoheadrightarrow \OOO(Z)^{H}$. If $S \subset \OOO(X)^{G}$ is $G$-separating, then the image $\iota^{*}(S) \subset \OOO(Z)^{H}$ is $H$-separating.
\end{prop}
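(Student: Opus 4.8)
The plan is to verify the $H$-separating condition for $\iota^*(S)$ directly, reducing any separation problem on $Z$ to one on $X$ via the surjection $\iota^*\colon \OOO(X)^G \twoheadrightarrow \OOO(Z)^H$. The whole argument is a short diagram chase, and the only place the hypotheses genuinely bite is the surjectivity, which is exactly what allows me to lift invariants from $Z$ back to $X$.

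First I would fix a pair $z,z' \in Z$ and assume it is separated by some $g \in \OOO(Z)^H$, i.e. $g(z) \neq g(z')$. By surjectivity of $\iota^*$ there is an invariant $f \in \OOO(X)^G$ with $\iota^*(f) = g$. Since $\iota^*(f) = f \circ \iota$, evaluating at $z$ and $z'$ gives $f(\iota(z)) = g(z) \neq g(z') = f(\iota(z'))$, so the $G$-invariant $f$ separates the points $\iota(z)$ and $\iota(z')$ of $X$.

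Next I would invoke the assumption that $S$ is $G$-separating: since $f \in \OOO(X)^G$ separates $\iota(z)$ and $\iota(z')$, there is some $h \in S$ with $h(\iota(z)) \neq h(\iota(z'))$. Rewriting this as $\iota^*(h)(z) = h(\iota(z)) \neq h(\iota(z')) = \iota^*(h)(z')$ shows that the element $\iota^*(h) \in \iota^*(S)$ separates $z$ and $z'$. As $z,z'$ were an arbitrary pair separated by an element of $\OOO(Z)^H$, this proves that $\iota^*(S)$ is $H$-separating.

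I expect no serious obstacle here; the content lies entirely in correctly identifying \emph{why} surjectivity (as opposed to a mere ring homomorphism) is required, namely that every $H$-invariant separating a pair in $Z$ must itself be the pullback of a $G$-invariant on $X$ in order to push the separation problem up to $X$, where the hypothesis on $S$ can be applied. The equivariance of $\iota$ is what guarantees $\iota^*$ lands in $\OOO(Z)^H$ in the first place, but it plays no further role in the chase.
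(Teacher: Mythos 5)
Your argument is correct and is essentially identical to the paper's proof: lift the separating $H$-invariant on $Z$ to a $G$-invariant on $X$ via the surjection $\iota^*$, apply the $G$-separating hypothesis on $S$ to the images $\iota(z),\iota(z')$, and pull the resulting $h\in S$ back to $\iota^*(h)$. Nothing to add.
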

\begin{proof} Let  $f\in \OOO(Z)^{H}$ and $z_{1},z_{2}\in Z$ such that $f(z_{1})\neq f(z_{2})$. By assumption $f=\iota^{*}(\tilde f)$ for some $\tilde f\in\OOO(X)^{G}$. Put $x_{i}:=\iota(z_{i})$. Then $\tilde f(x_{1})=f(z_{1})\neq f(z_{2}) = \tilde f(x_{2})$. Thus we can find an $h\in S$ such that $h(x_{1})\neq h(x_{2})$. It follows that $\bar h:=\iota^{*}(h) \in \iota^{*}(S)$ and $\bar h(z_{1}) = h(x_{1})\neq h(x_{2})=\bar h(z_{2})$.
\end{proof}

\begin{rem}
In general, the inverse map $(\iota^{*})^{-1}$ does not take $H$-separating sets
to $G$-separating sets. Take $K^{+}\subset \SL_{2}$ as the subgroup of upper triangular unipotent matrices, $X=K^{2}\oplus K^{2}\oplus K^{2}$ the sum of three copies of 
the standard representation of $\SL_{2}$ and $Z=K^{2}\oplus K^{2}$ the sum of two copies of the 
standard representation of $K^{+}$. Then $\iota\colon Z\to X$, $(v,w)\mapsto ((1,0),v,w)$ is $K^{+}$-equivariant and
induces an isomorphism  $\OOO(X)^{\SL_{2}}\simto \OOO(Z)^{K^{+}}$ (Roberts \cite{Roberts}). In fact, 
choosing the coordinates $(x_{0},x_{1},y_{0},y_{1},z_{0},z_{1})$ on $X$ and
$(y_{0},y_{1},z_{0},z_{1})$ on $Y$, we get from the classical description
\cite{ConciniProcesi} of the invariants and covariants of copies of $K^{2}$:
\begin{align*}
\OOO(X)^{\SLtwo} &=K[y_{1}x_{0}-y_{0}x_{1},z_{1}x_{0}-z_{0}x_{1},y_{1}z_{0}-y_{0}z_{1}],\\
\OOO(Y)^{K^{+}}&= K[y_{1},z_{1},y_{1}z_{0}-y_{0}z_{1}],
\end{align*}
and the claim follows, because $\iota^*(x_{0})=1, \iota^*(x_{1})=0$.

Now take $S:=\{y_{1},z_{1},y_{1}(y_{1}z_{0}-y_{0}z_{1}),z_{1}(y_{1}z_{0}-y_{0}z_{1})\}\subset
\OOO(Z)^{K^{+}}$. We show that $S$ is a $K^{+}$-separating set, but $(\iota^{*})^{-1}(S)\subset
\OOO(X)^{\SL_{2}}$ is not $\SL_{2}$-separating. For the first claim one has to
use that if $y_{1}$ and $z_{1}$ both vanish, then the third generator
$y_{1}z_{0}-y_{0}z_{1}$ of the invariant ring $\OOO(Y)^{K^{+}}$ also
vanishes. For the second claim we consider the elements
$v=((0,0),(0,0),(0,0))$ and $v'=((0,0),(1,0),(0,1))$ of $X$, which are separated by the invariants, but not by $(\iota^{*})^{-1}(S)$.
\end{rem}

For the following application recall that for a closed subgroup $H \subset G$ of finite index the {\it induced module\/} $\Ind_{H}^{G}V$ of an $H$-module $V$ is a finite dimensional $G$-module.

\begin{cor}\lab{IndMod.prop}
Let  $H\subset G$ be a closed subgroup of finite index and let $V$ be an $H$-module. Then $\bsep(H,V)\leq \bsep(G,\Ind_{H}^{G}V)$. In particular,  $\bsep(H)\leq \bsep(G)$. 
\end{cor}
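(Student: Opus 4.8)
The plan is to deduce the statement directly from Proposition~\ref{GHseparating.prop}, applied to the $H$-variety $Z:=V$, the $G$-variety $X:=\Ind_{H}^{G}V$, and the natural $H$-equivariant linear inclusion $\iota\colon V\hookrightarrow\Ind_{H}^{G}V$. Concretely, I would fix coset representatives $g_{1}=e,g_{2},\dots,g_{n}$ with $G=\bigsqcup_{i=1}^{n}g_{i}H$ and $n=[G:H]$, and use the block description $\Ind_{H}^{G}V=\bigoplus_{i=1}^{n}g_{i}\otimes V$, on which $G$ acts by $g\cdot(g_{i}\otimes v)=g_{\sigma(i)}\otimes(h_{i}v)$, where $gg_{i}=g_{\sigma(i)}h_{i}$ with $\sigma\in S_{n}$ and $h_{i}\in H$ depending on $g$. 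Then $\iota(v):=e\otimes v$ lands in the first block $V_{1}:=e\otimes V$, which is $H$-stable with $h\cdot(e\otimes v)=e\otimes(hv)$; so $\iota$ is linear and $H$-equivariant, and $\iota^{*}$ maps $\OOO(\Ind_{H}^{G}V)^{G}$ into $\OOO(V)^{H}$.

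The only real content is to verify the hypothesis of Proposition~\ref{GHseparating.prop}, namely that $\iota^{*}\colon\OOO(\Ind_{H}^{G}V)^{G}\to\OOO(V)^{H}$ is \emph{surjective}. Here I would avoid any averaging over the group (unavailable in general, e.g.\ in the modular case) and instead use a geometric transfer that sums over the finitely many blocks. Writing $w=\sum_{i}g_{i}\otimes v_{i}$ with $v_{i}\in V$ and, for $f\in\OOO(V)^{H}$, setting $\tilde f(w):=\sum_{i=1}^{n}f(v_{i})$, a short reindexing computation using $f(h_{i}v_{i})=f(v_{i})$ shows that $\tilde f$ is $G$-invariant for every $f\in\OOO(V)^{H}$. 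Since $\iota^{*}$ and the transfer $f\mapsto\tilde f$ are $K$-linear, it suffices to treat homogeneous $f$; for $f$ homogeneous of degree $\geq 1$ we have $f(0)=0$, so $\tilde f(e\otimes v)=f(v)$, i.e.\ $\iota^{*}(\tilde f)=f$, while constants are trivially hit. Thus $\iota^{*}$ is surjective and in fact degree-preserving, as required.

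With the hypothesis verified, I would extract the inequality as follows. Put $d:=\bsep(G,\Ind_{H}^{G}V)$, so that $S:=\OOO(\Ind_{H}^{G}V)^{G}\cap\OOO(\Ind_{H}^{G}V)_{\leq d}$ is $G$-separating; by Proposition~\ref{GHseparating.prop} the set $\iota^{*}(S)$ is $H$-separating, and since $\iota$ is linear, $\iota^{*}$ preserves degrees, so $\iota^{*}(S)\subset\OOO(V)^{H}\cap\OOO(V)_{\leq d}$. Hence $\OOO(V)_{\leq d}$ contains an $H$-separating set and is therefore itself $H$-separating, giving $\bsep(H,V)\leq d=\bsep(G,\Ind_{H}^{G}V)$. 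Taking the supremum over all $H$-modules $V$ and using that each $\Ind_{H}^{G}V$ is a $G$-module then yields $\bsep(H)=\sup_{V}\bsep(H,V)\leq\sup_{V}\bsep(G,\Ind_{H}^{G}V)\leq\bsep(G)$, which is the ``in particular'' statement.

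I expect the main obstacle to be the surjectivity of $\iota^{*}$ on invariants, and specifically arranging it to hold in arbitrary characteristic. The point to get right is that the transfer $f\mapsto\tilde f$ must be an honest \emph{sum} over the cosets, requiring no division, so that it survives in the modular (and non-reductive, positive-dimensional) setting; it lands in the invariants precisely because $f$ is $H$-invariant while $G$ merely permutes the blocks up to the $H$-twists $h_{i}$. Everything else is formal, being a direct unwinding of Proposition~\ref{GHseparating.prop} together with the degree-preservation of pullback along the linear map $\iota$.
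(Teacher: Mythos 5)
Your proposal is correct and follows essentially the same route as the paper: both reduce to Proposition~\ref{GHseparating.prop} via the canonical $H$-equivariant inclusion $\iota\colon V\hookrightarrow\Ind_{H}^{G}V=\bigoplus_{i}g_{i}V$, establish surjectivity of $\iota^{*}$ on invariants by the characteristic-free transfer $\tilde f(g_{1}v_{1},\dots,g_{n}v_{n}):=\sum_{i}f(v_{i})$ for homogeneous $f$ of positive degree, and conclude using that the linear map $\iota$ makes $\iota^{*}$ degree-preserving. Your extra care about constants and about $f(0)=0$ matches the paper's restriction to $\OOO(V)^{H}_{+}$, so there is nothing to add.
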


\begin{proof} By definition, $\Ind_{H}^{G}V$ contains $V$ as an $H$-submodule in a canonical way.  
If $n:=[G:H]$ and $G = \bigcup_{i=1}^{n}g_{i}H$, then $\Ind_{H}^{G}V =
\bigoplus_{i=1}^{n}g_{i}V$. Moreover, the inclusion $\iota:
V\hookrightarrow\Ind_{H}^{G}V$ induces a surjection
$\iota^{*}: \OOO(\Ind_{H}^{G}(V))^{G}\twoheadrightarrow \OOO(V)^{H}, \,\,
f\mapsto f|_{V}$. In fact, for  $f\in\OOO(V)^{H}_{+}$, a preimage
$\tilde{f}$ is given by
$\tilde{f}(g_{1}v_{1},\ldots,g_{n}v_{n}):=\sum_{i=1}^{n}f(v_{i})$, $v_{i}\in V$, which is
easily seen to be  $G$-invariant. Now the claim follows from
Proposition~\ref{GHseparating.prop} above, because the restriction map $\iota^{*}$ is linear and so preserves degrees.
\end{proof}

\begin{prop}[{\name{Derksen}  and \name{Kemper}  \cite[Theorem 2.3.16]{Derksen2002Computational-i}}]
\lab{separatingSubmodule.prop}
Let $G$ be a reductive group, $V$ a $G$-module und $U\subset V$ a
submodule. The restriction map $\OOO(V)\rightarrow \OOO(U)$, $f\mapsto f|_{U}$ takes every separating
set of $\OOO(V)^{G}$ to a separating set of $\OOO(U)^{G}$. In particular, we have
\[
\bsep(G,U)\le\bsep(G,V).
\] 
\end{prop}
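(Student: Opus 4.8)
The plan is to recognize this as a special case of Proposition~\ref{GHseparating.prop}, applied with $H=G$, $X=V$, $Z=U$, and $\iota\colon U\hookrightarrow V$ the inclusion of the submodule. Since $U$ is $G$-stable, this $\iota$ is automatically $G$-equivariant, so the only hypothesis I would need to check is that the restriction map $\iota^{*}\colon\OOO(V)^{G}\to\OOO(U)^{G}$, $f\mapsto f|_{U}$, is surjective; granting that, Proposition~\ref{GHseparating.prop} delivers at once that $\iota^{*}(S)$ is $G$-separating whenever $S$ is.

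To verify the surjectivity I would invoke reductivity in the form of complete reducibility: writing $V=U\oplus W$ as $G$-modules gives a $G$-equivariant projection $\pi\colon V\to U$ with $\pi\circ\iota=\Id_{U}$. Then for $f\in\OOO(U)^{G}$ the pullback $\pi^{*}f=f\circ\pi$ is a $G$-invariant function on $V$ restricting to $f$, so $\iota^{*}$ is onto. This is the one genuinely substantive step, and also the only place where reductivity enters.

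To obtain the inequality $\bsep(G,U)\le\bsep(G,V)$ I would then track degrees. Restriction to the linear subspace $U$ carries a homogeneous function of degree $d$ to one of degree $d$ (or to $0$), and hence never raises degree. So if $d=\bsep(G,V)$ and $S=\OOO(V)^{G}_{\le d}$ is separating, then $\iota^{*}(S)$ is a separating subset of $\OOO(U)^{G}_{\le d}$; since any set containing a separating set is again separating, $\OOO(U)^{G}_{\le d}$ is separating and therefore $\bsep(G,U)\le d$.

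The hard part, and really the whole content, is the surjectivity of $\iota^{*}$ on invariants, which depends entirely on the equivariant splitting $\pi$ (equivalently, on the Reynolds operator). For non-reductive $G$ a submodule need not be a direct summand, so this splitting can fail and the conclusion itself may break down; reductivity is thus the essential hypothesis here rather than a technical convenience.
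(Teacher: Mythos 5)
There is a genuine gap, and the paper itself points at it: the sentence immediately following the proposition reads ``in positive characteristic the restriction map is in general not surjective when restricted to the invariants.'' Your entire argument hinges on exactly that surjectivity, which you derive from a $G$-equivariant splitting $V=U\oplus W$. Such a splitting exists when $G$ is \emph{linearly} reductive, i.e.\ when every $G$-module is completely reducible; over a field of characteristic zero this coincides with reductivity, but the paper works over an algebraically closed field of arbitrary characteristic, and in characteristic $p>0$ a reductive group (say $\SL_2$) is generally not linearly reductive: a submodule $U\subset V$ need not be a direct summand, and $\iota^{*}\colon\OOO(V)^{G}\to\OOO(U)^{G}$ need not be onto. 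So the hypothesis of Proposition~\ref{GHseparating.prop} cannot be verified in the stated generality, and your reduction to that proposition breaks down.

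The correct route avoids surjectivity altogether and uses the geometric characterization of separation recalled in the introduction: for a reductive group $G$ (in any characteristic), two points $x,x'$ are separated by some invariant if and only if $\overline{Gx}\cap\overline{Gx'}=\emptyset$. Given a separating set $S\subset\OOO(V)^{G}$ and $u,u'\in U$ with $f(u)\neq f(u')$ for some $f\in\OOO(U)^{G}$, one concludes $\overline{Gu}\cap\overline{Gu'}=\emptyset$ inside $U$; since $U$ is closed and $G$-stable in $V$, the same holds inside $V$, so $u,u'$ are separated by an element of $\OOO(V)^{G}$, hence by some $h\in S$, and then $h|_{U}$ separates them. (This is the mechanism behind Remark~\ref{rem1}.) Note that one does not need $h|_{U}$ to realize every invariant of $U$ --- only to separate the relevant pairs of points, which is why the statement survives even though the generating-set analogue fails. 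Your degree bookkeeping at the end is fine and transfers verbatim to this argument.
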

Let us mention here that in positive characteristic the restriction map is in general not surjective when restriced to the invariants, and so a generating set is not necessarily mapped onto  a generating set.

We finally remark that for finite groups there is a $G$-module $V$ such $\bsep(G,V)=\bsep(G)$. The same holds for the $\beta$-values in characteristic zero.

\begin{prop}\lab{betasepExplicit}
Let $G$ be a finite group group and $V_{\reg}=KG$ its regular
representation. Then
\[
\bsep(G)=\bsep(G,V_{\reg}).
\]
\end{prop}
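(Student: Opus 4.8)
The inequality $\bsep(G,V_{\reg})\le\bsep(G)$ is immediate, since $V_{\reg}$ is one of the modules over which the supremum defining $\bsep(G)$ is taken. All the content lies in the reverse inequality, i.e. in showing $\bsep(G,V)\le\bsep(G,V_{\reg})$ for an \emph{arbitrary} $G$-module $V$. The plan is to reduce every separation problem to one living on a single copy of the regular representation, using that a finite group is reductive so that Proposition~\ref{separatingSubmodule.prop} applies.

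First I would embed $V$ into a sum of copies of $V_{\reg}$. Writing $V_{\reg}^{\oplus \dim V}\cong KG\otimes_K V$ with $G$ acting only on the left factor, the map $v\mapsto \sum_{g\in G} g\otimes g^{-1}v$ is a $G$-equivariant injection $V\hookrightarrow V_{\reg}^{\oplus\dim V}$ (the identity term recovers $v$, giving injectivity, and equivariance is a one-line check). Since $G$ is finite, hence reductive, Proposition~\ref{separatingSubmodule.prop} then yields $\bsep(G,V)\le\bsep(G,V_{\reg}^{\oplus n})$ with $n=\dim V$. It therefore remains to prove that adding copies of the regular representation does not raise the separating degree, i.e. $\bsep(G,V_{\reg}^{\oplus n})\le\bsep(G,V_{\reg})$ for all $n$.

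This last step is the heart of the matter and the step I expect to be the main obstacle. Set $d:=\bsep(G,V_{\reg})$ and take $w,w'\in V_{\reg}^{\oplus n}$ separated by some invariant; as $G$ is finite this forces $w,w'$ into distinct (closed) orbits, so $w-gw'\neq 0$ for every $g\in G$. The idea is to find one $G$-equivariant linear map $\lambda\colon V_{\reg}^{\oplus n}\to V_{\reg}$ that keeps the two orbits apart, and then pull a low-degree separating invariant of $V_{\reg}$ back along it. For each fixed $g$, some coordinate projection $p_j$ satisfies $p_j(w-gw')\neq 0$, so $\{\lambda\mid \lambda(w-gw')=0\}$ is a \emph{proper} linear subspace of $\Hom_G(V_{\reg}^{\oplus n},V_{\reg})$; since $K$ is infinite, the finitely many such subspaces (one per $g\in G$) cannot cover the whole space. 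Hence there is a $\lambda$ with $\lambda(w)\neq \lambda(gw')=g\,\lambda(w')$ for all $g$, i.e. $\lambda(w),\lambda(w')$ lie in different $G$-orbits of $V_{\reg}$. By definition of $d$ there is an invariant $f\in\OOO(V_{\reg})^G_{\le d}$ with $f(\lambda(w))\neq f(\lambda(w'))$, and because $\lambda$ is linear and equivariant, $f\circ\lambda\in\OOO(V_{\reg}^{\oplus n})^G_{\le d}$ separates $w$ and $w'$.

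The crucial move, and the place where care is needed, is exactly the construction of the single equivariant projection $\lambda$: it requires knowing that the displacement vectors $w-gw'$ are nonzero (which is where distinctness of orbits and finiteness of $G$ enter) and that each is detected by \emph{some} equivariant linear map to $V_{\reg}$ (here trivially by a coordinate projection, or more generally by the fact that $KG$ is a cogenerator). Combining the three displayed inequalities gives $\bsep(G,V)\le\bsep(G,V_{\reg})$ for every $V$, hence $\bsep(G)\le\bsep(G,V_{\reg})$, and with the trivial direction the desired equality follows.
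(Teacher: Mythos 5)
Your proof is correct, and its skeleton coincides with the paper's: embed $V$ as a submodule of $V_{\reg}^{\dim V}$ and invoke Proposition~\ref{separatingSubmodule.prop} (legitimate since a finite group is reductive), reducing everything to the inequality $\bsep(G,V_{\reg}^{\oplus n})\le\bsep(G,V_{\reg})$. Where you diverge is in how that last inequality is established. The paper simply cites \cite[Corollary~3.7]{DraismaSeparating}, which states $\bsep(G,V^{m})=\bsep(G,V)$ for every $G$-module $V$ and every $m$ and rests on the polarization machinery for separating invariants. You instead give a self-contained argument: since $G$ is finite, points separated by invariants lie in distinct (finite, closed) orbits, so the finitely many displacement vectors $w-gw'$ are all nonzero; each is detected by some coordinate projection, so the ``bad'' sets $\{\lambda\in\Hom_{G}(V_{\reg}^{\oplus n},V_{\reg})\mid\lambda(w-gw')=0\}$ are proper subspaces, and over the infinite field $K$ their union cannot exhaust $\Hom_{G}(V_{\reg}^{\oplus n},V_{\reg})$; a generic equivariant linear $\lambda$ then keeps the orbits apart and pulls back a degree-$\le d$ separating invariant without raising degree. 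This buys independence from the external reference at the cost of generality: your argument uses finiteness of $G$ in two essential ways (finitely many subspaces to avoid, and the equivalence of ``separated by invariants'' with ``distinct orbits''), whereas the cited polarization result holds in much broader settings. For the proposition at hand, where $G$ is finite, both routes are equally valid.
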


In fact, every $G$-module $V$ can be embedded as a submodule in $V_{\reg}^{\dim V}$. 
Since, by  \cite[Corollary~3.7]{DraismaSeparating},  $\bsep(G,V^{m}) = \bsep(G,V)$ for any $G$-module $V$  and every positive integer $m$, the claim follows from Proposition~\ref{separatingSubmodule.prop}.

\vskip1cm
%%%%%%%%%%%%%%%%%%%%%%%%%%%%%%%%%%%%%%%
\section{The case of non-finite algebraic groups}\lab{alggroups.sec}

In this section we prove the following theorem which is equivalent to Theorem~A from the first section.
\begin{thm}\lab{alggroup.thm}
For any non-finite linear algebraic group $G$ we have $\bsep(G) = \infty$.
\end{thm}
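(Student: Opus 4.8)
The plan is to reduce, via two monotonicity principles together with the structure theory of connected algebraic groups, to three base cases: $\bsep(K^{+})=\infty$, $\bsep(K^{*})=\infty$, and $\bsep(G)=\infty$ for $G$ semisimple. The first principle I would isolate is monotonicity under passage to quotients: if $\pi\colon G\twoheadrightarrow Q$ is a surjective homomorphism of algebraic groups, then $\bsep(Q)\le\bsep(G)$. Indeed, every $Q$-module $V$ becomes a $G$-module through $\pi$, and since $\pi$ is surjective the actions of $G$ and of $Q$ on $V$ have the same invariant functions and hence the same separating subsets; thus $\bsep(G,V)=\bsep(Q,V)$ for every $Q$-module $V$, and taking the supremum over the (smaller) family of $Q$-modules yields $\bsep(Q)\le\bsep(G)$. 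The second principle is Corollary~\ref{IndMod.prop}, which gives $\bsep(G^{0})\le\bsep(G)$ because $G^{0}$ has finite index in $G$. Since $G$ is non-finite precisely when $G^{0}$ is a non-trivial connected group, it suffices to prove $\bsep(G)=\infty$ for every non-trivial connected $G$.

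Next I would run the structure theory. Let $R_{u}=R_{u}(G)$ be the unipotent radical, so that $\bar G:=G/R_{u}$ is reductive. If $\bar G$ is trivial, then $G=R_{u}$ is a non-trivial connected unipotent group and admits a surjection onto $K^{+}$ (e.g. through its abelianization). If $\bar G$ is a non-trivial torus, then composing $G\twoheadrightarrow\bar G$ with a coordinate projection gives a surjection onto $K^{*}$. Otherwise $\bar G$ has positive semisimple rank, and $\bar G\twoheadrightarrow\bar G/R(\bar G)$ is a non-trivial semisimple quotient of $G$. In each case the quotient-monotonicity principle reduces the claim to one of the three base cases, so it remains to establish those.

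The base case $\bsep(K^{*})=\infty$ is the easy one. For the two-dimensional module $V_{d}$ on which $t\in K^{*}$ acts with weights $1$ and $-d$, the invariant ring is $K[x^{d}y]$, whose lowest-degree non-constant element has degree $d+1$. Since $K^{*}$ is reductive, one checks via orbit closures that a point with $x^{d}y\neq 0$ has orbit closure disjoint from that of the origin and so must be separated from it by an invariant; as every invariant of degree $\le d$ is constant, no such set is separating, whence $\bsep(K^{*},V_{d})\ge d+1\to\infty$.

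The main obstacle is the other two base cases. For $K^{+}$ one must exhibit an explicit family of modules (sums of Jordan blocks, i.e. symmetric powers of the standard two-dimensional representation) and argue directly from Definition~\ref{Def1} that any separating set must contain an invariant of unbounded degree; here the non-reductivity of $K^{+}$ is precisely what makes the verification delicate, since separation can no longer be read off from disjointness of orbit closures. For semisimple $G$ one must either produce such a family directly or transfer the $\SL_2$- (respectively $\PSL_2$-) computation to $G$ along the inclusion $\SL_2\hookrightarrow G$ arising from a root subgroup, using Proposition~\ref{GHseparating.prop} together with a suitable slice rather than quotient-monotonicity, since a simple group admits no $\SL_2$-quotient. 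I expect the $K^{+}$ and semisimple constructions, and the proof that they genuinely force the separating degree up, to be where essentially all the real work lies.
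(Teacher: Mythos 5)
Your reduction skeleton is sound and is essentially the one the paper uses: Corollary~\ref{IndMod.prop} reduces to connected $G$, and quotient-monotonicity (which the paper invokes implicitly when it passes from a non-trivial character, or from a surjection onto a semisimple group, back to $G$) together with structure theory reduces everything to the three base cases $K^{+}$, $K^{*}$ and semisimple $G$. Your $K^{*}$ computation is correct. But the two remaining base cases are exactly where you stop, and they cannot be waved through. For $K^{+}$ your plan (``sums of Jordan blocks, argue directly from the definition'') is not an argument, and it silently assumes characteristic zero: in characteristic $p$ the symmetric powers $S^{n}V$ do not supply modules of unbounded separating degree, and the paper instead uses Frobenius twists, taking $W=V\oplus V_{F^{n}}$ and computing $\OOO(W)^{K^{+}}=K[x_{1},y_{1},x_{0}^{p^{n}}y_{1}-x_{1}^{p^{n}}y_{0}]$ (Proposition~\ref{prop1}). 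Even in characteristic zero the right module is not a bare sum of Jordan blocks but $W=V^{*}\oplus S^{n}V$, and the lower bound $\deg f\ge n+1$ for any invariant separating $(x_{0},v_{0})$ from $(x_{0},0)$ comes from identifying bihomogeneous invariants of bidegree $(p,q)$ with $K^{+}$-equivariant maps $S^{q}V_{n}\to S^{p}V$ and observing that such a map not vanishing on $v_{0}^{q}$ must be injective on the cyclic submodule $\langle K^{+}v_{n}^{q}\rangle\cong V_{qn}$, forcing $p\ge qn$ (Proposition~\ref{prop2}). Your proposal contains no substitute for either argument.

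For the semisimple case your suggested transfer along $\SL_{2}\hookrightarrow G$ is the harder road: it presupposes a family of $\SL_{2}$-modules of unbounded separating degree, which you have not produced, so the difficulty is merely displaced. The paper instead restricts to a one-parameter subgroup $K^{*}\subset T\subset G$ --- the very group you already handled --- and transfers the lower bound upward using the associated-bundle construction $G\times^{H}X$ (Lemma~\ref{lem1} and Proposition~\ref{subgroup.prop}); the one genuinely new ingredient is realizing your two-dimensional $K^{*}$-module with weights $(-k_{0},nk_{0})$ as a $K^{*}$-submodule of an honest $G$-module, which is done with Weyl modules via $(D^{\lambda})^{*}\oplus D^{n\lambda}\subset (E^{\lambda})^{*}\oplus E^{n\lambda}$. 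In short: the frame of your proof matches the paper's, but the content of the two hard base cases --- which you yourself identify as ``where essentially all the real work lies'' --- is missing, so the proposal does not yet constitute a proof.
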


We start with the additive group $K^{+}$. Denote by $V=Ke_{0}\oplus
Ke_{1}\simeq K^{2}$ the standard 2-dimensional $K^{+}$-module: $s\cdot
e_{0}:=e_{0},\,\,\, s\cdot e_{1}:=se_{0}+e_{1}$ for $s\in K^{+}$. If $\Char
K=p>0$ we can ``twist'' the module $V$ with the Frobenius map $F^{n}\colon
K^{+}\to K^{+}, s \mapsto s^{p^{n}}$ to obtain another $K^{+}$-module which we
denote by $V_{F^{n}}$.

\begin{prop}\lab{prop1}
Let $\Char K = p>0$ and consider the $K^{+}$-module $W:=V \oplus
V_{F^{n}}$. We write $\OOO(W)=K[x_{0},x_{1},y_{0},y_{1}]$. Then $\OOO(W)^{K^{+}}= 
K[x_{1},y_{1},x_{0}^{p^{n}}y_{1}- x_{1}^{p^{n}}y_{0}]$. In particular, $\bsep(K^{+},W) = p^{n}+1$ and so $\bsep(K^{+}) = \infty$.
\end{prop}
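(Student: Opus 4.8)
The plan is to prove Proposition~\ref{prop1} in three stages: first compute the invariant ring $\OOO(W)^{K^{+}}$ explicitly, then use that description to determine $\bsep(K^{+},W)$, and finally conclude $\bsep(K^{+})=\infty$ by letting $n\to\infty$.

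\emph{Computing the invariant ring.} The action of $s\in K^{+}$ fixes $x_{0},y_{0}$ and sends $x_{1}\mapsto x_{1}+sx_{0}$, $y_{1}\mapsto y_{1}+s^{p^{n}}y_{0}$ (reading off the twisted action on $V_{F^{n}}$, where $s$ acts through $s^{p^{n}}$). So I would first verify directly that the three proposed generators $x_{0}$, $y_{0}$, and $\delta:=x_{0}^{p^{n}}y_{1}-x_{1}^{p^{n}}y_{0}$ are invariant---the first two are obvious, and for $\delta$ one checks the $s$-action gives $x_{0}^{p^{n}}(y_{1}+s^{p^{n}}y_{0})-(x_{1}+sx_{0})^{p^{n}}y_{0}=x_{0}^{p^{n}}y_{1}+s^{p^{n}}x_{0}^{p^{n}}y_{0}-(x_{1}^{p^{n}}+s^{p^{n}}x_{0}^{p^{n}})y_{0}=\delta$, using that the $p^{n}$-th power map is additive in characteristic $p$. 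Wait, the proposition lists $x_{1},y_{1}$ rather than $x_{0},y_{0}$ as generators, so I should instead take the action to fix $x_{1},y_{1}$ with $x_{0}\mapsto x_{0}+\cdots$; re-reading the module definition, $s\cdot e_{0}=e_{0}$ and $s\cdot e_{1}=se_{0}+e_{1}$ means on coordinate functions $x_{1}$ is fixed and $x_{0}\mapsto x_{0}+sx_{1}$, so the invariants are $x_{1},y_{1}$ and $\delta=x_{0}^{p^{n}}y_{1}-x_{1}^{p^{n}}y_{0}$, which one checks is invariant by the same additivity argument.

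\emph{Showing these generate everything.} The inclusion $K[x_{1},y_{1},\delta]\subseteq\OOO(W)^{K^{+}}$ is the easy direction. For the reverse, I would compute the field of invariants and a transcendence/degree count. Since $K^{+}$ acts with generically free orbits on $W$ (a one-parameter unipotent action), the invariant field has transcendence degree $3=\dim W-1$, and $x_{1},y_{1},\delta$ are algebraically independent, so they generate the invariant field. To upgrade this to generation of the full invariant \emph{ring}, I would argue that the subring $R:=K[x_{1},y_{1},\delta]$ is integrally closed (it is a polynomial ring, hence normal) and that $\OOO(W)^{K^{+}}$ is integral over $R$; combined with equality of fraction fields this forces $R=\OOO(W)^{K^{+}}$. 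Alternatively, one can localize at $x_{1}$ and at $y_{1}$: on the open set $x_{1}\neq0$ the slice $x_{0}=0$ is a section, giving $\OOO(W)^{K^{+}}[x_{1}^{-1}]=K[x_{1}^{\pm1},y_{1},\delta]$, and similarly for $y_{1}$; intersecting these localizations recovers $R$ provided one controls the behavior along $x_{1}=y_{1}=0$. The main obstacle is precisely this normality/intersection argument in positive characteristic, where the Frobenius twist makes the geometry of the fixed-point and non-free locus delicate.

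\emph{Deducing the degree bound.} Once the invariant ring is $K[x_{1},y_{1},\delta]$ with $\deg x_{1}=\deg y_{1}=1$ and $\deg\delta=p^{n}+1$, I claim $\bsep(K^{+},W)=p^{n}+1$. The upper bound is clear since $\{x_{1},y_{1},\delta\}$ is a generating, hence separating, set of degree $p^{n}+1$. For the lower bound I must exhibit two points separated by some invariant but by no invariant of degree $\le p^{n}$. Every invariant of degree $\le p^{n}$ lies in $K[x_{1},y_{1},\delta]$ but $\delta$ itself has degree $p^{n}+1$, so such invariants lie in $K[x_{1},y_{1}]$; hence I would pick two points $w,w'$ with equal $x_{1},y_{1}$ coordinates but $\delta(w)\neq\delta(w')$---for instance both with $x_{1}=y_{1}=0$ but differing in $x_{0},y_{0}$ so that $\delta=-x_{1}^{p^{n}}y_{0}+x_{0}^{p^{n}}y_{1}$ varies. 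I must check these points are genuinely separated by \emph{some} invariant (namely $\delta$) yet agree on all of $K[x_{1},y_{1}]$, which is immediate. Finally, since $\bsep(K^{+},W)=p^{n}+1$ can be made arbitrarily large by increasing $n$, we conclude $\bsep(K^{+})=\sup_{V}\bsep(K^{+},V)=\infty$.
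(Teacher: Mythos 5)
Your overall architecture matches the paper's, and your second route to the ring computation (localize where $x_{1}\neq 0$ using the section $x_{0}=0$) is essentially the paper's argument: the paper exhibits an explicit section of $\pi$ over $\{x_{1}\neq 0\}$, deduces $\OOO(W)^{K^{+}}_{x_{1}}=K[x_{1},x_{1}^{-1},y_{1},f]$, and concludes $\OOO(W)^{K^{+}}=\OOO(W)\cap K[x_{1},x_{1}^{-1},y_{1},f]$. The step you flag as ``the main obstacle'' is in fact unproblematic: since $x_{1},y_{1},\delta$ are algebraically independent, the monomials $x_{1}^{i}y_{1}^{j}\delta^{k}$ are linearly independent, so $K[x_{1}^{\pm1},y_{1},\delta]\cap K[x_{1},y_{1}^{\pm1},\delta]=K[x_{1},y_{1},\delta]$, and $\OOO(W)^{K^{+}}$ sits inside both localizations while containing $K[x_{1},y_{1},\delta]$. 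By contrast, your first route has a real gap: equality of transcendence degrees only shows that $K(W)^{K^{+}}$ is \emph{algebraic} over $K(x_{1},y_{1},\delta)$, not equal to it, and the asserted integrality of $\OOO(W)^{K^{+}}$ over $R=K[x_{1},y_{1},\delta]$ is neither obvious nor justified --- it is essentially the statement to be proved (note that $\OOO(W)$ itself is certainly not integral over $R$, so no easy transfer is available).

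The second genuine error is in the lower bound. On the locus $x_{1}=y_{1}=0$ the invariant $\delta=x_{0}^{p^{n}}y_{1}-x_{1}^{p^{n}}y_{0}$ vanishes identically, so your proposed witnesses --- two points with $x_{1}=y_{1}=0$ differing in $x_{0},y_{0}$ --- are separated neither by $\delta$ nor by any invariant at all; the step you call ``immediate'' fails. The strategy is right, but the points must be chosen with, say, $y_{1}\neq 0$: take $w=(0,0,0,1)$ and $w'=(1,0,0,1)$ in the coordinates $(a_{0},a_{1},b_{0},b_{1})$. These agree on $x_{1}$ and $y_{1}$, hence on every invariant of degree $\leq p^{n}$ (such invariants being polynomials in $x_{1},y_{1}$, as you correctly argue), yet $\delta(w)=0\neq 1=\delta(w')$. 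With these two repairs your proof is complete and follows the paper's line of argument.
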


\begin{proof} It is easy to see that $f:=x_{0}^{p^{n}}y_{1}- x_{1}^{p^{n}}y_{0}$ is $K^{+}$-invariant. Define the $K^{+}$-invariant morphism
$$
\pi \colon W \to K^{3}, \ \  w=(a_{0},a_{1},b_{0},b_{1})\mapsto (a_{1},b_{1},a_{0}^{p^{n}}b_{1}- a_{1}^{p^{n}}b_{0}).
$$
Over the affine open set $U:= \{(c_{1},c_{2},c_{3})\in K^{3} \mid c_{1}\neq
0\}$, the induced map $\pi^{-1}(U) \to U$ is a trivial $K^{+}$-bundle. In
fact, the morphism $\rho\colon U \to \pi^{-1}(U)$ given by
$(c_{1},c_{2},c_{3}) \mapsto (0,c_{1},-c_{1}^{-p^{n}}c_{3},c_{2})$ is a
section of $\pi$, inducing a $K^{+}$-equivariant isomorphism $K^{+}\times U
\simto \pi^{-1}(U),\,\, (s,u)\mapsto s\cdot\rho(u)$. This implies that $\OOO(W)^{K^{+}}_{x_{1}}=K[x_{1},x_{1}^{-1},y_{1},f]$, hence $\OOO(W)^{K^{+}}=K[x_{0},x_{1},y_{0},y_{1}] \cap K[x_{1},x_{1}^{-1},y_{1},f]$, and the claim follows easily.
\end{proof}

If $K$ has characteristic zero, we need a different argument. Denote by $V_{n}:=S^{n}V$ the $n$th symmetric power of the standard $K^{+}$-module $V=Ke_{0}\oplus Ke_{1}$ (see above). This module is cyclic of dimension $n+1$, i.e. $V_{n}= \langle K^{+}v_{n}\rangle$ where $v_{n}:=e_{1}^{n}$, and for any $s\in K^{+}, s\neq 0$, the endomorphism $v\mapsto sv-v$ of $V_{n}$ is nilpotent of rank $n$. In particular, $V_{n}^{K^{+}} = Kv_{0}$ where $v_{0}:=e_{0}^{n}\in V_{n}$.

\begin{rem} \lab{cyclicmodules.rem}
For $q\geq 1$ consider the {\it $q$th symmetric power $S^{q}V_{n}$} of the module $V_{n}$. Then the cyclic submodule $\langle K^{+}v_{n}^{q}\rangle\subset S^{q}V_{n}$ generated by $v_{n}^{q}$ is $K^{+}$-isomorphic to $V_{qn}$, and 
$\langle K^{+}v_{n}^{q}\rangle^{K^{+}} = Kv_{0}^{q}$. One way to see this is by remarking that the modules $V_{n}$ are $\SLtwo$-modules in a natural way, and then to use representation theory of $\SLtwo$.
\end{rem}

\begin{prop}\lab{prop2}
Let $\Char K = 0$. Consider the $K^{+}$-module $W = V^{*}\oplus V_{n}$ and the two vectors
$w:=(x_{0},v_{0})$ and $w':=(x_{0},0)$ of $W$. Then there is a
$K^{+}$-invariant function $f\in \OOO(W)^{K^{+}}$ separating $w$ and $w'$, and any such $f$ has degree $\deg f\geq n+1$. In particular, $\bsep(K^{+},W) \geq n+1$, and so $\bsep(K^{+}) = \infty$.
\end{prop}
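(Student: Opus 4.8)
The plan is to exploit the maximal torus $T\cong\Kst$ of $\SLtwo$ that normalizes $K^{+}$ (recall that $V=V_{1}$ and $V_{n}=S^{n}V$ are $\SLtwo$-modules), and to organize everything by $T$-weights. First I would fix the basis $x_{0},x_{1}$ of $V^{*}$ dual to $e_{0},e_{1}$ together with the corresponding coordinate functions $\xi_{0},\xi_{1}\in\OOO(V^{*})_{1}$ (so $\xi_{i}(x_{j})=\delta_{ij}$), and the coordinates $z_{0},\dots,z_{n}\in\OOO(V_{n})_{1}$ dual to the weight basis $e_{0}^{n-i}e_{1}^{i}$ of $V_{n}$. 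Choosing $T$ so that $e_{0},e_{1}$ have $T$-weights $+1,-1$ (one checks this $T$ normalizes $K^{+}$), one reads off $\op{wt}\xi_{0}=+1$, $\op{wt}\xi_{1}=-1$ and $\op{wt}z_{i}=2i-n$; in particular $\op{wt}z_{0}=-n$. Since $\xi_{0}(x_{0})=1$, $\xi_{1}(x_{0})=0$ and $z_{0}$ is dual to $v_{0}=e_{0}^{n}$, evaluating at the points $(x_{0},cv_{0})$ (with $c=1$ for $w$ and $c=0$ for $w'$) sets $\xi_{0}=1,\ \xi_{1}=0,\ z_{0}=c$ and $z_{1}=\dots=z_{n}=0$, so that only the monomials $\xi_{0}^{a}z_{0}^{b}$ contribute to $f(w)$ and $f(w')$.

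For the existence of a separating invariant I would simply write down $F:=\sum_{i=0}^{n}\xi_{0}^{\,n-i}\xi_{1}^{\,i}z_{i}$, a homogeneous function of degree $n+1$. A direct check (using that in characteristic $0$ invariance is equivalent to annihilation by the infinitesimal generator $D$ of the $K^{+}$-action, with $D\xi_{0}=0$, $D\xi_{1}=\xi_{0}$ and $Dz_{i}=-(i+1)z_{i+1}$) shows $DF=0$, so $F\in\OOO(W)^{K^{+}}$; evaluating gives $F(w)=1\neq0=F(w')$. This already exhibits a separating invariant, of degree $n+1$.

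For the lower bound I would show that no invariant of degree $\le n$ separates $w$ and $w'$. Given $f\in\OOO(W)^{K^{+}}$ with $f(w)\neq f(w')$, some homogeneous component of $f$ separates them, and since $T$ normalizes $K^{+}$ the space $\OOO(W)^{K^{+}}$ is $T$-stable, so its $T$-weight components are again $K^{+}$-invariant; hence I may assume $f$ homogeneous of some degree $d$ and a $T$-eigenvector of weight $\lambda$. Being a $T$-eigenvector annihilated by $K^{+}$, $f$ is a highest weight vector for the Borel $B=T\cdot K^{+}$, whence $\lambda\ge0$ by standard highest weight theory. On the other hand, by the first paragraph the only monomials of $f$ affecting $f(w)$ and $f(w')$ are the $\xi_{0}^{\,d-b}z_{0}^{\,b}$ with $0\le b\le d$, of weight $d-b(n+1)$; if $d\le n$ and $b\ge1$ this weight is $\le n-(n+1)=-1<0$, contradicting $\lambda\ge0$. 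Thus for $d\le n$ only the monomial $\xi_{0}^{d}$ (with $b=0$) survives, giving $f(w)=f(w')$. Therefore any separating invariant has degree $\ge n+1$, so $\bsep(K^{+},W)\ge n+1$, and letting $n\to\infty$ yields $\bsep(K^{+})=\infty$.

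The conceptual heart, and the step I expect to be the real obstacle to get exactly right, is the weight bookkeeping: one must check that the single coordinate $z_{0}$ dual to the fixed vector $v_{0}$ carries the very negative weight $-n$, so that distinguishing $w$ from $w'$ inevitably costs a monomial of negative $T$-weight, while all $K^{+}$-invariants live in non-negative weights. Verifying that $F$ is invariant and that the reductions to homogeneous $T$-eigencomponents are legitimate is routine; the only place where characteristic $0$ is used, beyond the $\SLtwo$-structure, is in identifying $K^{+}$-invariance with $D$-closedness.
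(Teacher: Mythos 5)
Your proof is correct, and while your existence argument produces essentially the same invariant as the paper's (your $F=\sum_{i=0}^{n}\xi_{0}^{\,n-i}\xi_{1}^{\,i}z_{i}$ is, up to normalization, the function $\Psi^{-1}(\Id_{V_{n}})$ of bidegree $(n,1)$ constructed there), your lower bound takes a genuinely different route. The paper works through the isomorphism $\Psi\colon \OOO(U_{1}^{*}\oplus U_{2})_{(p,q)}^{G}\simto\Hom_{G}(S^{q}U_{2},S^{p}U_{1})$: a separating bihomogeneous invariant of bidegree $(p,q)$ gives a $K^{+}$-equivariant map $F\colon S^{q}V_{n}\to S^{p}V$ with $F(v_{0}^{q})\neq 0$, and since the cyclic submodule $\langle K^{+}v_{n}^{q}\rangle\cong V_{qn}$ has $K v_{0}^{q}$ as its space of fixed vectors (so every nonzero submodule contains it), $F$ is injective on $V_{qn}$, forcing $p+1=\dim S^{p}V\geq qn+1$. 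You instead exploit the torus $T\subset\SLtwo$ normalizing $K^{+}$: reduce to a homogeneous $K^{+}$-invariant $T$-eigenvector, invoke that such eigenvectors in the finite-dimensional $\SL_{2}$-module $\OOO(W)_{d}$ have non-negative weight, and observe that any monomial contributing to $f(w)-f(w')$ must involve $z_{0}$ (of weight $-n$) and hence has weight at most $d-(n+1)$. Both arguments ultimately rest on the $\SL_{2}$-structure of $V_{n}$ in characteristic zero (the paper uses it in Remark~\ref{cyclicmodules.rem}); yours trades the socle/dimension count for explicit weight bookkeeping, which is more elementary and self-contained, while the paper's $\Hom$-space picture identifies precisely which bidegrees can carry invariants not vanishing at $(x_{0},v_{0})$. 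The one point I would tighten is your closing aside: characteristic zero is used not only for the equivalence of invariance with $D$-closedness but also, most transparently, in the complete reducibility of $\OOO(W)_{d}$ behind the weight-positivity step (though that positivity can be established in any characteristic by other means).
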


\begin{proof} Let $U_{1},U_{2}$ be two finite dimensional vector spaces. There is a canonical isomorphism
$$
\Psi\colon \OOO(U_{1}^{*}\oplus U_{2})_{(p,q)}  \simto \Hom(S^{q}U_{2}, S^{p}U_{1})
$$
where $ \OOO(U_{1}^{*}\oplus U_{2})_{(p,q)}$ denotes the subspace of those regular functions on $U_{1}^{*}\oplus U_{2}$ which are bihomogeneous of degree $(p,q)$. If $F = \Psi(f)$, then for any $x\in U_{1}^{*}$ and $u\in U_{2}$ we have
$$
f(x,u) = x^{p}(F(u^{q})).
$$
(Since we are in characteristic 0 we can identify $S^{p}(U_{1}^{*})$ with $(S^{p}U_{1})^{*}$.)
Moreover, if $U_{1},U_{2}$ are $G$-modules, then $\Psi$ is $G$-equivariant and induces an isomorphism between the $G$-invariant bihomogeneous functions and the $G$-linear homomorphisms:
$$
\Psi\colon \OOO(U_{1}^{*}\oplus U_{2})_{(p,q)}^{G}  \simto \Hom_{G}(S^{q}U_{2}, S^{p}U_{1}).
$$
For the $K^{+}$-module $W = V^{*}\oplus V_{n}$ we thus obtain an isomorphism
$$
\Psi\colon \OOO(V^{*}\oplus V_{n})_{(p,q)}^{K^{+}}  \simto \Hom_{K^{+}}(S^{q}V_{n}, S^{p}V).
$$
Putting $p=n$ and $q=1$ and defining $f\in  \OOO(V^{*}\oplus V_{n})_{(n,1)}^{K^{+}}$ by $\Psi(f) = \Id_{V_{n}}$, we get $f(w) = f(x_{0},v_{0}) = x_{0}^{n}(v_{0})=x_{0}^{n}(e_{0}^{n})\neq 0$, and $f(w')=f(x_{0},0)=0$. Hence $w$ and $w'$ can be separated by invariants.

Now let $f$ be a $K^{+}$-invariant separating $w$ and $w'$ where $\deg f = d$. We can clearly assume that $f$ is bihomogeneous, say  of degree
$(p,q)$ where $p+q=d$. Because $f$ must depend on  $V_{n}$, we have $q \geq 1$.
Hence $f(w') = f(x_{0},0)=0$, and so $f(w)=f(x_{0},v_{0})\neq 0$. This implies
for $F:=\Psi(f)$ that $F(v_{0}^{q})\neq 0$. Now it follows from Remark~\ref{cyclicmodules.rem} above  that $F$ induces an injective map of $\langle K^{+}v_{n}^{q} \rangle$ into $S^{p}V$, and so
$$
p+1 = \dim S^{p}V \geq \dim \langle K^{+}v_{n}^{q} \rangle = qn+1 \geq n+1.
$$
Hence $\deg f = p+q \geq n+1$.
\end{proof}

To handle the general case we use the following construction. Let $G$ be an algebraic group and $H \subset G$ a closed  subgroup. We assume that $H$ is reductive. For an affine $H$-variety $X$ we define
$$
G \times^{H} X := (G\times X)\quot H := \Spec(\OOO(G \times X)^{H})
$$
where $H$ acts (freely) on the product $G\times X$ by $h(g,x):= (gh^{-1},hx)$, commuting with the action of $G$ by left multiplication on the first  factor. We denote by $[g,x]$ the image of $(g,x) \in G \times X$
in the quotient $G \times^{H} X$. 

The following is well-known. It follows from general results from geometric invariant theory, see e.g.  \cite{Mumford1994Geometric-invar}.
\be
\item The canonical morphism $G \times^{H} X \to G/H$, $[g,x]\mapsto gH$, is a fiber bundle (in the \'etale topology) with fiber $X$.
\item If the action of $H$ on $X$ extends to an action of $G$, then $G \times^{H} X \simto G/H \times X$ where $G$ acts diagonally on $G/H \times X$ (i.e. the fiber bundle is trivial).
\item The canonical morphism $\iota\colon X \hookrightarrow G \times^{H} X$ given by $x\mapsto [e,x]$ is an $H$-equivariant closed embedding.
\ee

\begin{lem}\lab{lem1}
If $\phi\colon G \times^{H} X \to Y$ is $G$-separating, then the composite morphism  $\phi\circ\iota \colon X \to Y$ is $H$-separating. Moreover, if $S\subset \OOO(G \times^{H} X)^{G}$ is a $G$-separating set, then its image $\iota^{*}(S) \subset \OOO(X)^{H}$ is $H$-separating.
\end{lem}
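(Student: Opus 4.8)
The plan is to prove the second (function) statement first and then to deduce the first (morphism) statement from it. For the function statement I would invoke Proposition~\ref{GHseparating.prop}, taking its ``$X$'' to be $G\times^{H}X$ and its ``$Z$'' to be $X$. Since $\iota\colon X\hookrightarrow G\times^{H}X$ is an $H$-equivariant closed embedding (property (c) of the construction), the only hypothesis of Proposition~\ref{GHseparating.prop} left to verify is that $\iota^{*}$ induces a surjection $\OOO(G\times^{H}X)^{G}\twoheadrightarrow\OOO(X)^{H}$. Granting this, Proposition~\ref{GHseparating.prop} immediately yields that $\iota^{*}(S)$ is $H$-separating, which is the second assertion.

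The heart of the matter is therefore this surjectivity, which I would establish by a direct computation of invariants. Because $G\times^{H}X=\Spec\OOO(G\times X)^{H}$ and the $G$- and $H$-actions on $G\times X$ commute, we have $\OOO(G\times^{H}X)^{G}=\OOO(G\times X)^{G\times H}$. The $G$-action is left multiplication on the first factor, so a $G$-invariant function $F$ satisfies $F(g,x)=F(e,x)$ for all $g$; hence restriction to $\{e\}\times X$ identifies $\OOO(G\times X)^{G}$ with $\OOO(X)$. Tracking the residual $H$-action (coming from $h\cdot(g,x)=(gh^{-1},hx)$) through this identification, I would check that it becomes the standard $H$-action on $\OOO(X)$, so that $\OOO(G\times X)^{G\times H}\cong\OOO(X)^{H}$. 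Since $\iota(x)=[e,x]$, this identification is precisely $\iota^{*}$; in particular $\iota^{*}$ is an isomorphism onto $\OOO(X)^{H}$, and a fortiori surjective.

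To deduce the morphism statement I would reduce it to the function statement via the remark following the definition of a separating morphism. If $\phi\colon G\times^{H}X\to Y\subset K^{m}$ is $G$-separating, then its coordinate functions $\phi_{1},\dots,\phi_{m}\in\OOO(G\times^{H}X)^{G}$ form a $G$-separating set. By the second statement just proved, $\iota^{*}(\{\phi_{1},\dots,\phi_{m}\})$ is an $H$-separating set; but these are exactly the coordinate functions of $\phi\circ\iota\colon X\to Y\subset K^{m}$. Since $H$ is reductive, the converse direction of that same remark applies, and an $H$-separating set of coordinate functions makes $\phi\circ\iota$ an $H$-separating morphism, as desired.

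The main obstacle is the surjectivity in the second paragraph, and specifically the verification that the residual $H$-action on $\OOO(X)\cong\OOO(G\times X)^{G}$ is the naive one and that the resulting identification coincides with $\iota^{*}$; the hypothesis that $H$ is reductive is used both to guarantee that $G\times^{H}X$ is a genuine affine quotient and, in the final step, to license the converse in the coordinate-function remark. As an alternative, the morphism statement also admits a direct geometric proof: using that the quotient map $p\colon G\times X\to G\times^{H}X$ sends closed $H$-stable sets to closed sets, one shows $\overline{G\cdot\iota(x)}=p(G\times\overline{H\cdot x})$, whence disjointness of $\overline{H\cdot x}$ and $\overline{H\cdot x'}$ forces disjointness of $\overline{G\cdot\iota(x)}$ and $\overline{G\cdot\iota(x')}$; there too reductivity of $H$ enters through the closed-image property of good quotients.
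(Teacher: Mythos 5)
Your proof is correct, and it splits into a part that coincides with the paper and a part that takes a genuinely different route. For the second assertion you do exactly what the paper does: identify $\OOO(G\times^{H}X)^{G}=\OOO(G\times X)^{G\times H}\cong\OOO(X)^{H}$ via $\iota^{*}$ and feed the resulting surjectivity into Proposition~\ref{GHseparating.prop}; your explicit check that the residual $H$-action on $\OOO(G\times X)^{G}\cong\OOO(X)$ is the standard one, and that the identification really is $\iota^{*}$, is a detail the paper leaves implicit. For the first assertion you diverge: the paper argues directly and geometrically, using $\overline{G[e,x]}=[G,\overline{Hx}]$ to pass from disjointness of $H$-orbit closures in $X$ to disjointness of $G$-orbit closures in $G\times^{H}X$, whereas you derive the morphism statement from the function statement through the coordinate-function remark, invoking reductivity of $H$ (a standing hypothesis of the $G\times^{H}X$ construction) to get the converse direction, i.e.\ that an $H$-separating set of coordinate functions makes $\phi\circ\iota$ a separating morphism. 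Both are valid: the paper's argument is self-contained and makes the orbit geometry transparent, while yours gets the morphism statement essentially for free once the invariant-theoretic identification is in place, at the price of routing through the GIT characterization of separation by invariants for the reductive group $H$. Your closing sketch of a direct geometric alternative is in fact the paper's actual proof, and you correctly identify the point the paper glosses over, namely that the closed-image property of the good quotient $G\times X\to G\times^{H}X$ is what justifies $\overline{G\cdot\iota(x)}=[G,\overline{Hx}]$.
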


\begin{proof} For $x\in X$ we have $\overline{G[e,x]} = [G,\overline{Hx}]$. Therefore, if $\overline{Hx} \cap \overline{Hx'}=\emptyset$, then $\overline{G[e,x]} \cap \overline{G[e,x']} = \emptyset$ and so $\phi\circ\iota(x) = \phi([e,x]) \neq \phi([e,x']) = \phi\circ\iota(x')$. The second claim follows from Proposition~\ref{GHseparating.prop}, because $\OOO(G \times^{H} X)^{G} = \OOO(G \times X)^{G\times H} = \OOO(X)^{H}$ and so $\iota^{*}$ induces an isomorphism $\OOO(G\times^{H}X)^{G} \simto \OOO(X)^{H}$.
\end{proof}

Now let $V$ be a $G$-module and  $X:=V|_{H}$, the underlying $H$-module. Choose a closed $G$-equivariant embedding $G/H \simto Gw_{0} \hookrightarrow W$ where $W$ is a $G$-module. Then we get the following composition of closed embeddings where the first one is $H$-equivariant and the remaining are $G$-equivariant:
$$
\mu\colon V|_{H} \hookrightarrow G \times^{H} V \simto G/H \times V \hookrightarrow W \times V.
$$
The map $\mu$ is given by $\mu(v) = (w_{0},v)$. It follows from Lemma~\ref{lem1} and Remark~\ref{rem1}  that for any $G$-separating morphism $\phi\colon W\times V \to Y$ the composition $\phi\circ\mu\colon V|_{H} \to Y$ is $H$-separating. In particular, if $G$ is reductive, then for any $G$-separating set $S\subset \OOO(W\times V)$ the image $\mu^{*}(S) \subset \OOO(V)^{H}$ is $H$-separating. Since $\deg \mu^{*}(f) \leq \deg f$ this implies the following result.

\begin{prop}\lab{subgroup.prop}
Let $G$ be a reductive group, $H\subset G$ a closed reductive subgroup and $V'$ an $H$-module. If $V'$ is isomorphic to an $H$-submodule of a $G$-module $V$, then
$$
\bsep(H,V') \leq \bsep(G).
$$
\end{prop}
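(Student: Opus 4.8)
The plan is to split the argument into two steps: first establish the special case where $V'$ is the \emph{full} restriction $V|_{H}$ of the ambient $G$-module $V$, and then deduce the general submodule case from it by means of the reductive submodule bound, Proposition~\ref{separatingSubmodule.prop}.

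For the first step I would invoke exactly the construction $\mu$ developed in the paragraph preceding the proposition. Given the $G$-module $V$, choose a $G$-equivariant closed embedding $G/H \simto Gw_{0} \hookrightarrow W$ into a $G$-module $W$, and form the composite closed embedding $\mu\colon V|_{H} \hookrightarrow W \times V$, $v \mapsto (w_{0},v)$. Since $W \times V$ is again a $G$-module, the definition of $\bsep(G)$ as a supremum over all $G$-modules supplies a $G$-separating set $S \subset \OOO(W\times V)^{G}$ all of whose elements have degree $\leq \bsep(G)$. As $G$ is reductive, I can pass from this separating \emph{set} to a $G$-separating \emph{morphism} $\phi\colon W\times V \to Y$ whose coordinate functions are the elements of $S$ (the reductive direction of the remark following Definition~\ref{Def1}). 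By Lemma~\ref{lem1} together with Remark~\ref{rem1}, the composite $\phi\circ\mu$ is $H$-separating, so its coordinate functions $\mu^{*}(S) \subset \OOO(V)^{H}$ form an $H$-separating set. The decisive bookkeeping point is the degree estimate $\deg\mu^{*}(f)\leq\deg f$: because $\mu$ is the affine-linear map $v\mapsto(w_{0},v)$, substituting the fixed value $w_{0}$ into a function of degree $d$ yields a function on $V|_{H}$ of degree $\leq d$. This gives $\bsep(H,V|_{H}) \leq \bsep(G,W\times V) \leq \bsep(G)$.

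For the second step I would use that $V'$ is isomorphic to an $H$-submodule of $V|_{H}$. Since $H$ is reductive, Proposition~\ref{separatingSubmodule.prop}, applied to the reductive group $H$, the $H$-module $V|_{H}$, and its submodule $V'$, yields $\bsep(H,V') \leq \bsep(H,V|_{H})$. Chaining the two inequalities produces $\bsep(H,V') \leq \bsep(H,V|_{H}) \leq \bsep(G)$, which is the claim.

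I do not expect a genuine obstacle here, since the heavy lifting — the identity $\OOO(G\times^{H}X)^{G}=\OOO(X)^{H}$ and the transport of separating data along $\iota$ in Lemma~\ref{lem1} — is already in place; the real care lies in tracking where each reductivity hypothesis is spent. Reductivity of $H$ is used \emph{twice}: once to make the associated-bundle quotient $G\times^{H}V$ well-behaved, and once to invoke Proposition~\ref{separatingSubmodule.prop} for passing to the submodule. Reductivity of $G$ is used to convert a $G$-separating set into a $G$-separating morphism, the step that genuinely fails for non-reductive groups. Finally, I would flag that the hypothesis ``$V'$ embeds into some $G$-module $V$'' is precisely what lets the bound $\bsep(G)$ enter at all, through the $G$-module $W\times V$; without an ambient $G$-module there is nothing to compare $V'$ to.
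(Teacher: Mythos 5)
Your proposal is correct and follows the paper's argument essentially verbatim: the embedding $\mu\colon V|_{H}\hookrightarrow W\times V$ via the associated bundle $G\times^{H}V$, the passage (using reductivity of $G$) from a separating set of degree $\leq\bsep(G)$ to a separating morphism, Lemma~\ref{lem1} with Remark~\ref{rem1}, and the degree estimate $\deg\mu^{*}(f)\leq\deg f$. Your only addition is to spell out explicitly the reduction from $V|_{H}$ to the submodule $V'$ via Proposition~\ref{separatingSubmodule.prop}, a step the paper leaves implicit but which indeed uses the reductivity of $H$ exactly as you say.
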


Now we can prove the main result of this section,

\begin{proof}[Proof of Theorem~\ref{alggroup.thm}]
By Proposition~\ref{IndMod.prop} we can assume that $G$ is connected.

\smallskip
(a) Let $G$ be semisimple, $T \subset G$ a maximal torus and $B\supset T$ a Borel subgroup. If $\lambda\in X(T)$ is dominant we denote by $E^{\lambda}$ the Weyl-module of $G$ of highest weight $\lambda$, and by $D^{\lambda}\subset E^{\lambda}$  the highest weight line. Choose a one-parameter subgroup $\rho\colon K^{*}\to T$ and define $k_{0}\in\ZZ$ by $\rho(t)u = t^{k_{0}}\cdot u$ for $u\in D^{\lambda}$. For any $n\in\NN$ put
$$
V_{n}' := (D^{\lambda})^{*}\oplus D^{n\lambda} \subset V_{n}:= (E^{\lambda})^{*}\oplus E^{n\lambda}.
$$
Then $V_{n}'$ is a two-dimensional $K^{*}$-module with weights $(-k_{0},nk_{0})$. Hence  $\OOO(V_{n}')^{K^{*}}$ is generated by a homogeneous invariant of degree $n+1$ and so $\bsep(K^{*},V_{n}')=n+1$. Now Proposition ~\ref{subgroup.prop} implies
$$
n+1 = \bsep(K^{*},V_{n}') \leq \bsep(G)
$$
and the claim follows. In addition, we have also shown that $\bsep(K^{*}) = \infty$.

\smallskip
(b) If $G$ admits a non-trivial character $\chi\colon G \to K^{*}$ then the claim follows because $\bsep(G) \geq \bsep(K^{*})=\infty$, as we have seen in (a).

\smallskip
(c) If the character group of $G$ is trivial, then either $G$ is unipotent or
there is a surjective homomorphism $G\to H$ where $H$ is semisimple (use
\cite[Corollary 8.1.6 (ii)]{SpringerNew}). In the first case there is a surjective homomorphism $G \to K^{+}$ and the claim follows from 
Proposition~\ref{prop1} and Proposition~\ref{prop2}. In the second case the claim follows from (a). 
\end{proof}

\vskip1cm
%%%%%%%%%%%%%%%%%%%%%%%%%%%%%%%%%%
\section{Relative degree bounds}\lab{reldegreebounds.sec}

In this section all groups are finite. We want to prove the following result which covers Theorem~B from the first section.

\begin{thm}\lab{finitegroups.thm}
Let $G$ be a finite group, $H \subset G$ a subgroup, $V$ a $G$-module and $W$ an $H$-module. Then
$$
\bsep(H,W) \leq \bsep(G,\Ind_{H}^{G}W)\quad\text{and}\quad \bsep(G,V)  \leq [G:H] \, \bsep(H,V).
$$
In particular
$$
\bsep(H) \leq \bsep(G) \leq [G:H] \, \bsep(H), \text{ and so }  \bsep(G) \leq |G|.
$$
Moreover, if $H \subset G$ is normal, then
$$
\bsep(G) \leq \bsep(G/H)\, \bsep(H).
$$
\end{thm}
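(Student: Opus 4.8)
The first inequality $\bsep(H,W)\le\bsep(G,\Ind_H^G W)$ is exactly Corollary~\ref{IndMod.prop}, and taking the supremum over all $H$-modules $W$ already yields $\bsep(H)\le\bsep(G)$. So the genuine content is the bound $\bsep(G,V)\le [G:H]\,\bsep(H,V)$, from which the rest follows formally: taking the supremum over $G$-modules $V$ gives $\bsep(G)\le[G:H]\,\bsep(H)$, and specializing to $H=\{e\}$ (where $\bsep(\{e\})=1$, since homogeneous coordinates already separate points) gives $\bsep(G)\le|G|$. The plan for the main inequality is the following. Write $n:=[G:H]$, $d:=\bsep(H,V)$, fix a left transversal $G=\bigsqcup_{i=1}^n g_iH$ with $g_1=e$, and a finite $H$-separating set $S_H\subset\OOO(V)^H$ of degree $\le d$. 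For $F\in\OOO(V)^H$ the translate $g_i\cdot F$ depends only on the coset $g_iH$, and $G$ permutes the collection $\{g_1\cdot F,\dots,g_n\cdot F\}$; hence the elementary symmetric functions $e_j(g_1\cdot F,\dots,g_n\cdot F)$ are $G$-invariant of degree $\le jd\le nd$. I would take all of these, as $F$ ranges over $\operatorname{span}_K S_H$ and $j=1,\dots,n$, and show they separate $G$-orbits (for finite groups this is the same as being separated by invariants).

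The crux, and the main obstacle, is the joint separation. Given $v,v'$ with $Gv\neq Gv'$, note this forces $Hv\neq Hg_j^{-1}v'$ for every $j$ (otherwise $v\in Gv'$). If one tries to use the elements of $S_H$ one at a time, the argument fails, because distinct invariants may match the two value-multisets through \emph{different} permutations. The fix is to reduce to a single scalar invariant: since $S_H$ separates $H$-orbits and $K$ is infinite, a generic $F\in\operatorname{span}_K S_H$ satisfies $F(v)\neq F(g_j^{-1}v')$ for all $j$ simultaneously (each equality cuts out a proper linear subspace of the coefficient space, and a finite union of these cannot exhaust it). If now every $e_j(g_1\cdot F,\dots,g_n\cdot F)$ agreed on $v$ and $v'$, then the monic polynomials $\prod_i\bigl(T-F(g_i^{-1}v)\bigr)$ and $\prod_i\bigl(T-F(g_i^{-1}v')\bigr)$ would coincide, so the value-multisets $\{F(g_i^{-1}v)\}_i$ and $\{F(g_i^{-1}v')\}_i$ would be equal; but then $F(v)=F(g_1^{-1}v)$ would equal some $F(g_j^{-1}v')$, contradicting the choice of $F$. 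Hence some $e_j$ separates $v,v'$, which is a $G$-invariant of degree $\le nd$. Passing to a single generic $F$ is exactly what collapses the problem to the univariate fact that the coefficients of a monic polynomial determine its multiset of roots; this is the relative analogue of the \name{Derksen}--\name{Kemper} argument for $\bsep(G)\le|G|$ (the case $H=\{e\}$, $F$ a generic linear form).

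For the normal case $H\trianglelefteq G$ set $Q:=G/H$, $n:=\bsep(H)$, $\bar n:=\bsep(Q)$, and let $V$ be a $G$-module. I would take a finite $H$-separating set $S_H\subset\OOO(V)^H$ of degree $\le n$ and let $M:=\operatorname{span}_K(Q\cdot S_H)\subset\OOO(V)^H$ be the finite-dimensional $Q$-submodule it generates; all its elements have degree $\le n$, because $Q$ acts by degree-preserving substitutions. A basis $f_1,\dots,f_r$ of $M$ defines a $Q$-equivariant morphism $\Phi:=(f_1,\dots,f_r)\colon V\quot H\to M^*$ whose components have degree $\le n$ on $V$. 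Since $S_H\subset M$ separates $H$-orbits, $\Phi$ is injective on $V\quot H=V/H$, so points lying in different $Q$-orbits have images in different $Q$-orbits of $M^*$. Now apply the definition of $\bsep(Q)$ to the $Q$-module $M^*$, giving a $Q$-separating set of degree $\le\bar n$, and pull it back along $\Phi$ into $(\OOO(V)^H)^Q=\OOO(V)^G$, producing invariants of degree $\le\bar n\cdot n$. If $Gv\neq Gv'$, then the images of $v,v'$ in $V/H$ lie in distinct $Q$-orbits, hence so do their $\Phi$-images in $M^*$, and these are separated by a degree-$\le\bar n$ $Q$-invariant whose pullback is a $G$-invariant of degree $\le\bar n n$ separating $v,v'$. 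Thus $\bsep(G,V)\le\bar n\,n$, and the supremum over $V$ gives $\bsep(G)\le\bsep(G/H)\,\bsep(H)$. The one point requiring care is that $\Phi$ need not be a closed embedding, since $M$ may fail to \emph{generate} $\OOO(V)^H$; I therefore avoid invoking Proposition~\ref{GHseparating.prop} directly and instead argue orbit-separation by hand, using only that $\Phi$ is equivariant and injective and that $Q$ is finite.
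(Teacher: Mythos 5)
Your argument is correct, and for the key inequality $\bsep(G,V)\le [G:H]\,\bsep(H,V)$ it takes a genuinely different route from the paper. The paper keeps the $H$-separating data as a vector-valued morphism $\phi\colon V\to W$ of degree $\bsep(H,V)$, forms $\tilde\phi(v)=(\phi(g_1v),\dots,\phi(g_dv))\in W^d$, and then invokes the result of Draisma, Kemper and Wehlau (Lemma~\ref{sepsym.lem}) that the \emph{polarized} elementary symmetric functions are $\Ss_d$-separating on $W^d$; composing gives an explicit finite $G$-separating morphism of degree $\le d\cdot\deg\phi$. You instead collapse the vector-valued data to a single scalar: for a fixed pair $v,v'$ in distinct $G$-orbits, a generic $F\in\operatorname{span}_K S_H$ satisfies $F(v)\neq F(g_j^{-1}v')$ for all $j$ at once (finitely many proper subspaces cannot cover a vector space over the infinite field $K$), and then the ordinary elementary symmetric functions of the coset translates $g_i\cdot F$ do the job via the monic-polynomial/multiset argument. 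This correctly identifies and circumvents the ``different permutations for different invariants'' obstacle that the paper handles through polarization. What you lose is an explicit finite separating morphism independent of the pair $(v,v')$ (your invariants vary with the pair, which is harmless for bounding $\bsep$ since only the degree-truncated invariant ring must separate) and you use that $K$ is infinite; what you gain is self-containedness, avoiding Lemma~\ref{sepsym.lem} entirely. Your treatment of the normal case is essentially the paper's argument (construct a $G$-equivariant, degree-$\le\bsep(H)$ morphism to a $G/H$-module that separates $H$-orbits, then compose with a $G/H$-separating morphism), merely written out with the module $M=\operatorname{span}_K(Q\cdot S_H)$ made explicit; your extra care about $\Phi$ not being a closed embedding is sound but not needed beyond what you do, since for finite groups orbit separation by invariants is automatic. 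The reductions to $\bsep(H)\le\bsep(G)$ via Corollary~\ref{IndMod.prop} and to $\bsep(G)\le|G|$ via $H=\{e\}$ match the paper.
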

Note that the inequalities $\bsep(G,V)\le [G:H]\bsep(H,V)$ and $\bsep(G)\le |G|$ were
already proved by \name{Derksen}  and \name{Kemper}  (\cite[Corollary 24]{KemperSeparating}, \cite[Corollary 3.9.14]{Derksen2002Computational-i}).

The proof needs some preparation. Let $V,W$ be finite dimensional vector spaces and $\phi\colon V \to W$ a morphism.

\begin{defn}\lab{degree.def} The {\it degree of $\phi$} is defined in the
  following way, generalizing the degree of a polynomial function. Choose a
  basis $(w_{1},\ldots,w_{m})$ of $W$, so that $\phi(v) = \sum_{j=1}^{m}f_{j}(v)
  w_{j}$ for $v\in V$. Then
$$
\deg\phi := \max\{\deg f_{j}|\quad j=1,\ldots,m \}.
$$
It is easy to see that this is independent of the choice of a basis.
\end{defn}

If $V$ is a $G$-module and $\phi\colon V \to W$ a separating morphism, then $\bsep(G,V) \leq \deg\phi$. Moreover,  there is a separating morphism $\phi\colon V \to W$ for some $W$ such that $\bsep(G,V) = \deg\phi$. 

For any (finite dimensional) vector space $W$ we regard $W^{d}=W\otimes K^{d}$ as the direct sum of $\dim W$ copies of the standard $\Ss_{d}$-module $K^{d}$. In this case we have the following result due to \name{Draisma}  , \name{Kemper}  and \name{Wehlau} \cite[Theorem 3.4]{DraismaSeparating}.

\begin{lem}\lab{sepsym.lem}
The polarizations of the elementary symmetric functions form an $\Ss_{d}$-separating set of $W^{d}$. In particular, there is an $\Ss_{d}$-separating morphism $\psi_{W}\colon W^{d}\to K^{N}$ of degree $\leq d$.
\end{lem}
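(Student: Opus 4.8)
The plan is to exploit that $\Ss_d$ is finite and $K$ is algebraically closed, so that separating an orbit is the same as separating a point. Concretely, a point of $W^{d}$ is a $d$-tuple $(v_{1},\ldots,v_{d})$ of vectors $v_{i}\in W$, the group $\Ss_d$ permutes the entries, and two tuples lie in the same orbit precisely when they agree as multisets of vectors. Thus I must show that the polarized elementary symmetric functions distinguish any two $d$-tuples that are not permutations of one another.

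First I would record what the polarizations are in coordinates. Writing $\ell=\sum_{r} c_{r}\ell_{r}$ for a linear form on $W$ in terms of coordinate forms $\ell_{r}\in W^{*}$, the invariant
\[
E_{k}(\ell;v_{1},\ldots,v_{d}) := e_{k}\bigl(\ell(v_{1}),\ldots,\ell(v_{d})\bigr)
\]
is, for a fixed tuple, a polynomial of degree $k\le d$ in the $c_{r}$, and its coefficients with respect to the monomials in the $c_{r}$ are exactly the polarizations of $e_{k}$; each therefore has degree at most $d$. Hence it suffices to prove that the family $\{E_{k}(\ell;-) \mid 1\le k\le d,\ \ell\in W^{*}\}$ separates orbits: a value $E_{k}(\ell;v)\neq E_{k}(\ell;v')$ forces some monomial coefficient in the $c_{r}$ — that is, some polarization of $e_{k}$ — to take different values on $v$ and $v'$, since a polynomial that does not vanish at a point is not identically zero.

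The key geometric step is a reduction to the scalar case by a single generic linear form. Given two tuples whose underlying multisets $A=\{v_{1},\ldots,v_{d}\}$ and $B=\{v'_{1},\ldots,v'_{d}\}$ differ, I would choose $\ell\in W^{*}$ injective on the finite set $A\cup B$; such $\ell$ exists because the forms failing to separate a fixed pair $v\neq v'$ lie on the hyperplane $\ell(v-v')=0$, and a finite union of hyperplanes is proper over the infinite field $K$. For this $\ell$ the assignment vector $\mapsto$ scalar is a bijection on $A\cup B$, so it carries the multiset $A$ faithfully to $\{\ell(v_{i})\}$ and $B$ to $\{\ell(v'_{i})\}$; since $A\neq B$, these two multisets of $d$ scalars differ. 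Finally, the functions $e_{1},\ldots,e_{d}$ determine a multiset of $d$ scalars — they are the coefficients of $\prod_{i}(T-\ell(v_{i}))$, which recovers its roots with multiplicity over any field — so some $e_{k}$ takes different values, i.e. $E_{k}(\ell;v)\neq E_{k}(\ell;v')$. Collecting all the polarizations into one morphism $\psi_{W}\colon W^{d}\to K^{N}$ then yields a separating morphism of degree $\le d$.

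I expect the main obstacle to be this reduction step: verifying that a \emph{single} generic linear form already detects the difference of two multisets of vectors, which is where the hypothesis that $K$ is infinite is essential and where a naive argument could fail. Once this is in place the remaining ingredients — that $e_{1},\ldots,e_{d}$ separate multisets of scalars over any field, and that a separating value of $E_{k}$ propagates to a separating polarization coefficient — are characteristic-free and purely formal.
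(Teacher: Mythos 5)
Your proof is correct. Note first that the paper does not prove this lemma at all: it is quoted from Draisma, Kemper and Wehlau \cite[Theorem 3.4]{DraismaSeparating}, so there is no internal argument to compare against. Your argument is a valid self-contained proof and is essentially the standard one from that source: since $\Ss_{d}$ is finite, it suffices to separate tuples whose underlying multisets of vectors differ; the identification of the polarizations of $e_{k}$ with the coefficients (in the coordinates $c_{r}$ of a variable linear form $\ell=\sum_{r}c_{r}\ell_{r}$) of $e_{k}\bigl(\ell(v_{1}),\ldots,\ell(v_{d})\bigr)$ is exactly right, and it correctly converts a separating value of $E_{k}(\ell;-)$ at a single $\ell$ into a separating polarization. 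The two nontrivial points are both handled properly: the existence of a single $\ell\in W^{*}$ injective on the finite set of vectors involved uses only that $K$ is infinite (a finite union of the hyperplanes $\ell(v-v')=0$ is proper), and the fact that $e_{1},\ldots,e_{d}$ determine a multiset of $d$ scalars via the coefficients of $\prod_{i}(T-\ell(v_{i}))$ is characteristic-free, which matters since the paper works in arbitrary characteristic. The degree bound $\leq d$ is immediate since $\deg e_{k}=k\leq d$ and polarization does not increase degree.
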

Recall that the polarizations of a function $f\in\OOO(U)$ to $n$ copies of $U$ are defined in the following way. Write
$$
f(t_{1}u_{1}+t_{2}u_{2}+\cdots+t_{n}u_{n}) = \sum_{i_{1},i_{2},\ldots,i_{n}} t_{1}^{i_{1}}t_{2}^{i_{2}}\cdots t_{n}^{i_{n}}
 f_{i_{1}i_{2}\ldots i_{n}}(u_{1},u_{2},\ldots,u_{n})
$$
Then the functions $f_{i_{1}i_{2}\ldots i_{n}}(u_{1},u_{2},\ldots,u_{n})\in\OOO(U^{n})$ are called {\it polarizations of} $f$. Clearly, $\deg f_{i_{1}i_{2}\ldots i_{n}} \leq \deg f$. Moreover, if $U$ is a $G$-module and $f$ a $G$-invariant, then all 
$f_{i_{1}i_{2}\ldots i_{n}}$ are $G$-invariants with respect to the diagonal action of $G$ on $U^{n}$.

\begin{proof}[Proof of Theorem~\ref{finitegroups.thm}]
The first inequality $\bsep(H,W) \leq \bsep(G,\Ind_{H}^{G}W)$ is shown in Proposition~\ref{IndMod.prop}.

Let $V$ be a $G$-module, $v,w\in V$, and let $\phi\colon V \to W$ be an $H$-separating morphism of degree $\bsep(H,V)$. Consider the partition of $G$ into $H$-right cosets: $G = \bigcup_{i=1}^{d}H g_{i}$ where $d:=[G:H]$. Define the following morphism
$$
\begin{CD}
\bar\phi\colon V @>{\tilde\phi}>> W^{d} @>{\psi_{W}}>> K^{N}
\end{CD}
$$
where $\tilde\phi(v):=(\phi(g_{1}v),\ldots,\phi(g_{d}v))$ and $\psi_{W}\colon W^{d}\to K^{N}$ is the separating morphism from Lemma~\ref{sepsym.lem}.

We claim that $\bar\phi$ is $G$-separating. In fact, for $g\in G$ define the permutation $\sigma\in \Ss_{d}$ by $Hg_{i}g= Hg_{\sigma(i)}$, i.e. $g_{i}g = h_{i}g_{\sigma(i)}$ for a suitable $h_{i}\in H$. Then $\phi(g_{i}gv) = \phi(h_{i}g_{\sigma(i)}v) = \phi(g_{\sigma(i)}v)$ and so $\tilde\phi(gv) = \sigma^{-1}\tilde\phi(v)$. This shows that $\bar\phi$ is $G$-invariant. 

Assume now that $gv \neq w$ for all $g \in G$. This implies that $hg_{i}v\neq w$ for all $h\in H$ and $i=1,\ldots d$, and so $\phi(g_{i}v)\neq\phi(w)$ for $i=1,\ldots,d$, because $\phi$ is $H$-separating. As a consequence, $\tilde\phi(v) \neq \sigma\tilde\phi(w)$ for all permutations $\sigma\in \Ss_{d}$, hence $\bar\phi(v)\neq\bar\phi(w)$, because $\psi_{W}$ is $\Ss_{d}$-separating, and so $\bar\phi$ is $G$-separating.

For the degree  we get $\deg\bar\phi\leq \deg \psi_{W} \cdot \deg\tilde\phi \leq d \cdot \deg\phi=[G:H] \bsep(H,V)$.
This shows that 
$$
\bsep(G,V) \leq [G:H] \bsep(H,V).
$$
If $H\subset G$ is normal we can find an $H$-separating morphism $\phi\colon V \to W$ of degree $\bsep(H,V)$ such that $W$ is a $G/H$-module and $\phi$ is $G$-equivariant. Now choose an $G/H$-separating morphism $\psi\colon W \to U$ of degree  $\bsep(G/H,W)$. Then the composition $\psi\circ\phi\colon V \to U$ is $G$-separating of degree $\leq \deg\psi \cdot \deg\phi$. Thus
$$
\bsep(G,V) \leq \bsep(G/H,W)\,\bsep(H,V) \leq \bsep(G/H)\, \bsep(H),
$$
and the claim follows.
\end{proof}

\vskip1cm
%%%%%%%%%%%%%%%%%%%%%%%%%%%%%%%%%%%%%%%
\section{Degree bounds for some finite groups}

In principal, Proposition~\ref{betasepExplicit} allows to compute $\bsep(G)$ for any finite
group $G$. Unfortunately, the invariant ring $\OOO(V_{\reg})^{G}$ does not behave
well in a computational sense. We have been able to compute $\bsep(G)$ with \Magma
\cite{Magma} and the algorithm of \cite{KemperCompRed} in just one case (computation time about 20 minutes):

\begin{prop}[\Magma and Proposition~\ref{betasepExplicit}]\lab{MagmaBoundS3}
Let $\Char K=2$. Then $\bsep(S_{3})=4$.
\end{prop}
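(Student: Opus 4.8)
The plan is to reduce the computation of $\bsep(S_{3})$ to a single explicit module and then to analyze the resulting invariant ring directly. By Proposition~\ref{betasepExplicit} we have $\bsep(S_{3})=\bsep(S_{3},V_{\reg})$, where $V_{\reg}=KS_{3}$ is the $6$-dimensional regular representation; this replaces the supremum over all modules by one concrete number. The task is then to show $\bsep(S_{3},V_{\reg})=4$, i.e.\ that invariants of degree $\le 4$ separate all $S_{3}$-orbits in $V_{\reg}$ while those of degree $\le 3$ do not.

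Before computing I would use the (very simple) modular representation theory of $S_{3}$ in characteristic $2$ to organize the calculation. Since $S_{3}\cong\GL_{2}(\FF_{2})=\SL_{2}(\FF_{2})$ and the Sylow $2$-subgroup $C_{2}$ is cyclic, $KS_{3}$ has finite representation type with exactly three indecomposables: the trivial module $K$, the natural $2$-dimensional module $N$ (which is simple and projective, as its restriction to $C_{2}$ is free), and the projective cover $P$ of $K$ (a uniserial $2$-dimensional module inflated from the regular representation of $S_{3}/C_{3}\cong C_{2}$). In particular $V_{\reg}\cong P\oplus N^{2}$. On the summands separately the invariants are well understood: on $N$ they are the Dickson invariants $K[d_{2},d_{3}]$, so $\bsep(S_{3},N)=3$, and on $P$ they reduce to $C_{2}$-invariants, so $\bsep(S_{3},P)=2$. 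Hence any degree exceeding $3$ can only arise from the interaction between $P$ and the two copies of $N$.

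It is worth stressing that the general estimates of Theorem~\ref{finitegroups.thm} do not settle the question. Taking $H=C_{2}$ or $H=C_{3}$ both give only $\bsep(S_{3})\le[S_{3}:H]\,\bsep(H)=6$ (using $\bsep(C_{2})=2$ for the $2$-group and $\bsep(C_{3})=3$ for the cyclic group), and similarly the normal-subgroup bound $\bsep(S_{3})\le\bsep(C_{2})\,\bsep(C_{3})=6$; on the other side the subgroup inequality yields only $\bsep(S_{3})\ge\bsep(C_{3})=3$. So neither the exact value $4$ nor even the bound $\le 4$ follows formally, and a direct computation with the mixed module $P\oplus N^{2}$ is unavoidable.

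The concrete plan is therefore computational, via the separating-invariants algorithm of \cite{KemperCompRed} in \Magma. For the lower bound one exhibits two points of $V_{\reg}$ lying in distinct $S_{3}$-orbits on which every invariant of degree $\le 3$ agrees. For the upper bound one computes enough of $\OOO(V_{\reg})^{S_{3}}$ in degrees $\le 4$ and checks that these invariants already separate \emph{every} pair of orbits. I expect the upper bound to be the main obstacle: the invariant ring of this $6$-dimensional modular representation is complicated and need not be polynomial or Cohen--Macaulay, so the delicate point is to certify that degree $\le 4$ suffices for all pairs of points rather than merely to generate the ring up to some larger degree. This is exactly what forces the nontrivial machine computation; the decomposition $V_{\reg}\cong P\oplus N^{2}$ and the known invariants of the summands serve to guide and cross-check the output but do not replace it.
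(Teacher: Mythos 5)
Your proposal matches the paper's argument: the paper likewise reduces to the regular representation via Proposition~\ref{betasepExplicit} and then obtains $\bsep(S_{3},V_{\reg})=4$ by a \Magma computation using the algorithm of \cite{KemperCompRed}, which is the entire content of its proof. The representation-theoretic scaffolding you add (the decomposition of $V_{\reg}$ in characteristic $2$ and the remark that the general bounds of Theorem~\ref{finitegroups.thm} only give $3\le\bsep(S_{3})\le 6$) is not in the paper, but you correctly recognize that it guides rather than replaces the machine computation.
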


\begin{prop}\lab{betapGroup}
Let $\Char K=p>0$ and let $G$ be a $p$-group. Then $\bsep(G)=|G|$.
\end{prop}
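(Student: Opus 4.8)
The plan is to match the general upper bound with a lower bound produced by the regular representation. By Theorem~\ref{finitegroups.thm} we already have $\bsep(G)\le |G|$, so the entire content is the reverse inequality $\bsep(G)\ge |G|$. By Proposition~\ref{betasepExplicit} it suffices to work with a single module, the regular representation $V_{\reg}=KG$, and to exhibit two points $v,v'\in V_{\reg}$ that are separated by an invariant but such that \emph{every} separating invariant has degree at least $|G|$. Here the hypotheses $\Char K=p$ and $G$ a $p$-group are essential, not incidental: the group algebra $KG$ is local, every $G$-module has a nonzero fixed vector, and the transfer $\OOO(V)\to\OOO(V)^{G}$ annihilates the constants because $p\mid|G|$. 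It is precisely these modular phenomena that should force the separating degree all the way up to $|G|$, in sharp contrast to the non-modular situation where $\bsep(G)<|G|$ for non-cyclic $G$.

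For the separating invariant itself I would use an orbit product (a norm). Realizing $V_{\reg}$ as the permutation module with basis $(e_{g})_{g\in G}$ on which $G$ acts freely and transitively, a generic linear form $\ell$ has trivial stabilizer, so its orbit consists of $|G|$ distinct linear forms and $N:=\prod_{g\in G}g\cdot\ell$ is a $G$-invariant of degree exactly $|G|$. First I would choose $v,v'$ with $N(v)\ne N(v')$, which simultaneously shows that $v,v'$ are separated by invariants and exhibits a concrete separating invariant of degree $|G|$. The subtle requirement is to position $v$ and $v'$ so that nothing of smaller degree can distinguish them: the natural choice is to take $v$ with a free orbit and $v'$ a degenerate configuration lying in the same fibre of all the symmetric invariants, so that only the genuinely $G$-specific invariants — which in the modular case are concentrated in high degree — have any chance of separating the two.

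The main obstacle is this lower bound: proving that no invariant of degree $<|G|$ separates $v$ and $v'$. This step truly needs the modular structure and cannot be extracted from the reductions of Proposition~\ref{IndMod.prop} or Proposition~\ref{subgroup.prop}, since passing to subgroups or quotients only gives $\bsep(G)\ge\bsep(\ZZ/p)=p$, which falls far short of $|G|$ as soon as $G$ is not cyclic. I expect the cleanest route to parallel the argument of Proposition~\ref{prop2}: decompose a hypothetical separating invariant into bihomogeneous components on a suitable product $U_{1}^{*}\oplus U_{2}$, identify the component of bidegree $(p,q)$ with a $G$-equivariant homomorphism $S^{q}U_{2}\to S^{p}U_{1}$, and use the fact that it separates $v$ from $v'$ to force this map to be injective on a cyclic submodule isomorphic to $V_{\reg}$, of dimension $|G|$. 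A dimension count $\dim S^{p}U_{1}\ge|G|$ then bounds the bidegree from below and yields degree $\ge|G|$. The delicate, characteristic-$p$–specific part — and the step I expect to absorb most of the work — is to arrange modules and symmetric powers in which a full copy of the regular representation appears as a cyclic submodule with the right dimension count; this is exactly where the locality of $KG$ and the failure of semisimplicity must be exploited.
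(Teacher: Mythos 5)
Your overall strategy (the upper bound $\bsep(G)\le|G|$ plus a lower bound realized on the regular representation, with the orbit product $\prod_{g\in G}g\cdot\ell$ as the degree-$|G|$ separating invariant) agrees with the paper's, but the decisive step --- showing that \emph{no} invariant of degree $<|G|$ separates your two points --- is left as a hope, and the route you sketch for it would not work as stated. The identification $\OOO(U_{1}^{*}\oplus U_{2})_{(p,q)}\simeq\Hom(S^{q}U_{2},S^{p}U_{1})$ used in Proposition~\ref{prop2} relies on identifying $S^{p}(U_{1}^{*})$ with $(S^{p}U_{1})^{*}$, which the paper explicitly flags as a characteristic-zero fact; it fails in characteristic $p$ in exactly the degrees you would need. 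Moreover, even granting such an identification, the conclusion of that style of argument is $\dim S^{p}U_{1}\ge|G|$, which only converts into a bound on the total degree $p+q$ when $U_{1}$ is two-dimensional, and you have not explained how a copy of $V_{\reg}$ would arise as a cyclic submodule in such a setup. Finally, your choice of test points ($v$ with a free orbit, $v'$ ``degenerate'') is not the one that makes the argument close.

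The paper's actual argument is far more elementary, and it is the idea you are missing. Take $v=\sum_{g\in G}e_{g}$, the \emph{fixed} point of the permutation basis of $V_{\reg}$, and $v'=0$. The invariants of a permutation module are linearly spanned by the orbit sums $s_{m}$ of monomials, and $s_{m}(v)=|Gm|$ in $K$, which vanishes whenever $p$ divides the orbit length --- for a $p$-group this happens unless $m$ is itself $G$-invariant. Since $G$ permutes the basis of $V_{\reg}$ freely and transitively, the $G$-invariant monomials are exactly the powers of $\prod_{g}x_{g}$, whose degrees are multiples of $|G|$. Hence every homogeneous invariant of positive degree $<|G|$ vanishes at $v$ (and trivially at $0$), while the invariant monomial $\prod_{g}x_{g}$ takes the value $1$ at $v$ and $0$ at $0$; this gives $\bsep(G,V_{\reg})\ge|G|$ with no representation-theoretic machinery and no use of the local structure of $KG$ beyond the divisibility of orbit lengths by $p$. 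You should replace your Proposition~\ref{prop2}-style plan with this evaluation-at-the-fixed-point argument.
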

\begin{proof}
Let us start with a general remark. Let $G$ be an arbitrary finite group, and let $V$ be a {\it permutation module of $G$}, i.e. there is a basis $(v_{1},v_{2}, \ldots, v_{n})$ of $V$ which is permuted under $G$. Then the invariants are linearly spanned by the {\it orbit sums} $s_{m}$ of the monomials $m=x_{1}^{i_{1}}x_{2}^{i_{2}}\cdots x_{n}^{i_{n}}\in \OOO(V) = K[x_{1},x_{2},\ldots,x_{n}]$ which are defined in the usual way:
\[
s_{m}:=\sum_{f\in Gm}f
\]
The value of $s_{m}$ on the fixed point $v:=v_{1}+ v_{2}+\cdots + v_{n}\in V$ equals $|Gm|$. Hence, $s_{m}(v)=0$ if $p$ divides the index $[G:G_{m}]$ of the stabilizer $G_{m}$ of $m$ in $G$. It follows that for a $p$-group $G$ we have $s_{m}(v)\neq 0$ if and only if $m$ is  invariant under $G$.

If, in addition, $G$ acts transitively on the basis $(v_{1},v_{2}, \ldots, v_{n})$, then an invariant monomial $m$ is a power of $x_{1}x_{2}\cdots x_{n}$, and  thus has degree $\ell n \geq \dim V$. If we apply this to the regular representation, the claim follows.
\end{proof}

With Corollary~\ref{IndMod.prop} we get the next result.
\begin{cor}
Let $\Char K=p>0$ and $G$ be a group of order $rp^{k}$ with $(r,p)=1$. Then $\bsep(G)\ge p^{k}$.
\end{cor}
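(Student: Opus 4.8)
The plan is to reduce immediately to the $p$-group case already handled in Proposition~\ref{betapGroup}, using the subgroup monotonicity of Corollary~\ref{IndMod.prop}. The single group-theoretic input I need is the existence of a Sylow $p$-subgroup: since $|G| = rp^{k}$ with $(r,p)=1$, Sylow's theorem furnishes a subgroup $P \subset G$ with $|P| = p^{k}$.

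First I would invoke Proposition~\ref{betapGroup}. As $P$ is a $p$-group and $\Char K = p$, that proposition gives $\bsep(P) = |P| = p^{k}$. Next I would apply Corollary~\ref{IndMod.prop} to the inclusion $P \subset G$, which is a (closed, since $G$ is finite) subgroup of finite index; this yields $\bsep(P) \leq \bsep(G)$. Chaining the two facts gives
$$
p^{k} = \bsep(P) \leq \bsep(G),
$$
which is exactly the assertion.

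There is no real obstacle here: the result is a formal consequence of the two preceding results once the Sylow subgroup is produced, and no new representation-theoretic or combinatorial work is required. The only point worth stating explicitly is that $P$ realizes its own worst-case bound $|P|$ precisely because we are in the modular situation $\Char K = p$, so that the transitive permutation-module argument underlying Proposition~\ref{betapGroup} applies to the regular representation of $P$; without the hypothesis $\Char K = p$ this lower bound would fail.
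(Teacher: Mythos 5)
Your argument is exactly the one the paper intends: the corollary is stated immediately after Proposition~\ref{betapGroup} with the remark that it follows ``with Corollary~\ref{IndMod.prop}'', i.e.\ by taking a Sylow $p$-subgroup $P$ of order $p^{k}$, using $\bsep(P)=p^{k}$ from the $p$-group proposition, and then the monotonicity $\bsep(P)\le\bsep(G)$. Your proposal is correct and matches the paper's route.
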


\begin{prop}\lab{betaCyclicGroup}
Let $G$ be a cyclic group. Then $\bsep(G)=|G|$.
\end{prop}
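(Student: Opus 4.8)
The plan is to prove $\bsep(G)=|G|$ by producing a single $G$-module that realizes the value $|G|$; the opposite inequality $\bsep(G)\le|G|$ is already available from Theorem~\ref{finitegroups.thm}, so only the lower bound needs an argument. Write $G=\langle\sigma\rangle$ with $|G|=n$ and factor $n=p^{k}m$, where $p=\Char K$ and $p\nmid m$ (so that $k=0$ and $m=n$ when $p=0$ or $p\nmid n$). Since $K$ is algebraically closed and $p\nmid m$, I fix a primitive $m$-th root of unity $\zeta\in K$. I would then study the $p^{k}$-dimensional $G$-module $M$ with basis $e_{0},\ldots,e_{p^{k}-1}$ on which $\sigma$ acts by $\sigma e_{i}=\zeta\,e_{i+1}$ (indices read mod $p^{k}$); this is the tensor product of the regular representation of the Sylow $p$-subgroup with a faithful character of the prime-to-$p$ quotient, and a short computation shows that $\sigma$ has order exactly $n$ on $M$. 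In the non-modular case $k=0$ this is simply the faithful $1$-dimensional representation, so the construction is uniform.

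In dual coordinates $x_{0},\ldots,x_{p^{k}-1}$ with $\sigma x_{i}=\zeta^{-1}x_{i-1}$, the first key step is to describe $\OOO(M)^{G}$. On the homogeneous functions of degree $d$ the operator $\sigma$ acts as $\zeta^{-d}$ times the cyclic shift $S$ of the variables; since $S$ is a permutation of $p$-power order it is unipotent, so $\sigma$ has the single eigenvalue $\zeta^{-d}$ in degree $d$. Hence there are no invariants unless $m\mid d$, and when $m\mid d$ one has $\sigma=S$ in that degree, so the invariants are precisely the $S$-fixed functions, i.e. the sums over the shift-orbits of the degree-$d$ monomials (the ordinary $C_{p^{k}}$-invariants).

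The decisive step is the separation estimate for the two points $P:=e_{0}+\cdots+e_{p^{k}-1}$ and $0$. They lie in distinct $G$-orbits (the orbit of $P$ is $\{\zeta^{j}P\}$, never $0$) and are separated by the degree-$n$ invariant $(x_{0}x_{1}\cdots x_{p^{k}-1})^{m}$. Conversely, for an arbitrary invariant $f=\sum_{m\mid d}f_{d}$ I would evaluate $f(P)-f(0)$: every monomial equals $1$ at $P$, so a shift-orbit sum contributes its orbit size modulo $p$, which is nonzero only for the shift-fixed monomials $(x_{0}\cdots x_{p^{k}-1})^{d/p^{k}}$, forcing $p^{k}\mid d$. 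Combining this with the constraint $m\mid d$ coming from invariance and with $\gcd(m,p^{k})=1$ yields $n\mid d$; hence any $f$ with $f(P)\neq f(0)$ carries a monomial of degree $\ge n$, so $\deg f\ge n$. Therefore no invariant of degree $<n$ separates $P$ and $0$, which gives $\bsep(G,M)=n$ and thus $\bsep(G)=n=|G|$.

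I expect the main obstacle to be the bookkeeping in this last step, where the two divisibility conditions must be shown to combine correctly: the scalar factor $\zeta$ (the semisimple, prime-to-$p$ part of the action) forces $m\mid d$, while the vanishing of orbit sizes modulo $p$ (the unipotent, $p$-part) forces $p^{k}\mid d$, and it is exactly the coprimality of $m$ and $p^{k}$ that upgrades the individually weak bounds $m$ and $p^{k}$ to the full $n$. Verifying rigorously that these are the only surviving contributions to $f(P)-f(0)$ — in particular that all mixed monomials of lower degree either cancel in their orbit sums or vanish on $P$ — is the heart of the argument.
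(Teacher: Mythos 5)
Your proposal is correct and follows essentially the same route as the paper: the module you construct (scalar $\zeta$ tensored with the regular representation of the Sylow $p$-part, i.e.\ $\sigma\mapsto\zeta\cdot(\text{cyclic shift})$) is exactly the paper's module written with a single generator instead of two commuting ones, and the lower bound is obtained by the same evaluation at the sum of the basis vectors, with $m\mid d$ forced by the semisimple part and $p^{k}\mid d$ forced by orbit sizes vanishing mod $p$. The only cosmetic difference is your eigenvalue/unipotence justification of the description of the invariants, where the paper simply quotes the orbit-sum basis for permutation modules.
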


\begin{proof}
Let $|G|=r p^{k}$ where $(r,p)=1$  and choose two elements $g,h \in G$ of order $r$ and $q:=p^{k}$, respectively, so that  $G=\langle g,h\rangle$. We define a linear action of $G$ on  $V := \bigoplus_{i=1}^{q} K v_{i}$ by 
$$
gv_{i}:= \zeta\cdot  v_{i} \text{ and } hv_{i}:= v_{i+1} \text{ for } i=1,\ldots q
$$
where $\zeta\in K$ is a primitive $r$th root of unity and $v_{q+1}:=v_{1}$. We claim that the $G$-invariants $\OOO(V)^{G}$ are linearly spanned by the orbit sums $s_{m}$ where $r | \deg m$. In fact, $\OOO(V)^{\langle g\rangle}$ is linearly spanned by the monomials of degree $\ell r$ ($\ell\geq 0$), and the subgroup $H:=\langle h\rangle\subset G$ permutes these monomials. 

Now look again at the element $v: =v_{1}+ v_{2}+\cdots + v_{q}\in V$. If $r | \deg m$ then $s_{m}(v) = |Hm|$, and this is non-zero if and only if the monomial $m$ is invariant under  $H$. This implies that  $m$ is a power of $x_{1}x_{2}\cdots x_{q}$. Since the degree of $m$ is also a multiple of $r$ we finally get $\deg s_{m}\geq r q = |G|$.
\end{proof} 

\begin{cor}\lab{OrderDegBound}
Let $G$ be a finite group. Then we have
\[
\bsep(G)\ge \max_{g\in G}(\ord g).
\]
\end{cor}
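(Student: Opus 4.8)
The plan is to reduce the statement to the cyclic case by passing to the cyclic subgroup generated by an element of maximal order. First I would fix an element $g \in G$ realizing the maximum, say $\ord g = m = \max_{g' \in G}(\ord g')$, and set $H := \langle g \rangle$, the cyclic subgroup it generates. By construction $H$ is cyclic of order $|H| = \ord g = m$.

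Next I would invoke the two earlier results in sequence. Proposition~\ref{betaCyclicGroup} applied to the cyclic group $H$ gives $\bsep(H) = |H| = m$. Then Corollary~\ref{IndMod.prop}, applied to the subgroup $H \subset G$, yields $\bsep(H) \leq \bsep(G)$. Chaining these together produces
$$
\bsep(G) \geq \bsep(H) = |H| = \ord g,
$$
and since $g$ was chosen to have maximal order this is precisely the claimed inequality $\bsep(G) \geq \max_{g \in G}(\ord g)$.

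I expect no real obstacle here: the result is an immediate corollary of the two cited facts, which explains its placement directly after them. The only point that deserves a word is the verification that the hypotheses of Corollary~\ref{IndMod.prop} are satisfied, namely that $H$ is a closed subgroup of finite index; but for a finite group every subgroup is automatically closed and of finite index, so this condition holds trivially and nothing further needs to be checked.
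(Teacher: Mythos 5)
Your proof is correct and is exactly the argument the paper intends: the corollary follows by applying Proposition~\ref{betaCyclicGroup} to the cyclic subgroup $\langle g\rangle$ generated by an element of maximal order and then using the monotonicity $\bsep(H)\le\bsep(G)$ from Corollary~\ref{IndMod.prop}. Nothing is missing.
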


Let $D_{2n}=\langle \sigma,\rho\rangle$ denote the dihedral group of order
$2n$ with $\ord(\sigma)=2$, $\ord(\rho)=n$ and
$\sigma\rho\sigma^{-1}=\rho^{-1}$. 

\begin{prop}\lab{Dihedral2pr}
Assume that $\Char(K)=p$ is an odd prime, and let $r\ge 1$. Then $\bsep(D_{2p^{r}})=2p^{r}$.
\end{prop}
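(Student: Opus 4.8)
The plan is to prove the two inequalities $\bsep(D_{2p^{r}}) \le 2p^{r}$ and $\bsep(D_{2p^{r}}) \ge 2p^{r}$ separately. The upper bound is immediate: since $|D_{2p^{r}}| = 2p^{r}$, Theorem~\ref{finitegroups.thm} gives $\bsep(D_{2p^{r}}) \le |D_{2p^{r}}| = 2p^{r}$. Everything therefore reduces to producing a single $D_{2p^{r}}$-module on which no separating set of degree $< 2p^{r}$ exists. Write $q := p^{r}$ and $G := D_{2p^{r}}$.

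The module I would use is a sign-twisted version of the vertex permutation module. Put $V := \bigoplus_{i \in \ZZ/q} K v_{i}$ and let $G$ act by $\rho v_{i} := v_{i+1}$ and $\sigma v_{i} := -v_{-i}$. One checks directly that $\sigma^{2} = \rho^{q} = \Id$ and $\sigma\rho\sigma^{-1} = \rho^{-1}$, so this is a genuine $G$-module. On the dual coordinate functions $x_{i}$ the element $\rho$ permutes the $x_{i}$ cyclically, while $\sigma x_{i} = -x_{-i}$; hence on a monomial $x^{\alpha}$ of degree $d$ one has $\rho\cdot x^{\alpha} = x^{\rho\alpha}$ and $\sigma\cdot x^{\alpha} = (-1)^{d} x^{\bar\alpha}$, where $\rho\alpha$ denotes the cyclic shift and $\bar\alpha_{i} := \alpha_{-i}$. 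The norm $e := x_{0} x_{1} \cdots x_{q-1}$ satisfies $\rho\cdot e = e$ and $\sigma\cdot e = (-1)^{q} e = -e$ (as $q$ is odd), so $e$ is not invariant, but $e^{2}$ is invariant of degree $2q$. I would now test the two points $w := \sum_{i} v_{i}$ and $0$: since $e^{2}(w) = 1 \ne 0 = e^{2}(0)$ they are separated by an invariant, and the whole task is to show that no invariant of degree $< 2q$ separates them, i.e. that $f(w) = 0$ for every homogeneous invariant $f$ of degree $1 \le d < 2q$.

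The argument splits according to the parity of $d$. For odd $d$ there is a clean trick: because $\sigma w = -w$ and $f$ is $\sigma$-invariant and homogeneous, $f(w) = f(\sigma w) = f(-w) = (-1)^{d} f(w) = -f(w)$, and since $\Char K \ne 2$ this forces $f(w) = 0$. This is exactly where the factor $2$, and the twist by the sign, enter. For even $d$ the signs disappear: $\sigma\cdot x^{\alpha} = x^{\bar\alpha}$, so $G$ permutes the degree-$d$ monomials honestly, the degree-$d$ invariants are spanned by the orbit sums $s_{\alpha} = \sum_{\beta \in G\alpha} x^{\beta}$, and $s_{\alpha}(w) = |G\alpha|$. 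As $|G\alpha|$ divides $|G| = 2q$, it is nonzero in $K$ only when $|G\alpha| \in \{1,2\}$. An orbit of size $2$ would force the exponent-stabilizer to equal the unique subgroup $\langle\rho\rangle$ of order $q$, i.e. $\alpha$ would be $\rho$-fixed but not $\sigma$-fixed; but a $\rho$-fixed exponent vector is constant, hence automatically $\sigma$-symmetric, so size $2$ cannot occur. Thus $f(w) \ne 0$ for even $d$ forces an orbit of size $1$, i.e. $x^{\alpha} = e^{a}$ with $d = aq$, and evenness of $aq$ with $q$ odd forces $a$ even, so $a \ge 2$ and $d \ge 2q$. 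Combining the two cases, every invariant of degree $< 2q$ vanishes at $w$, so $w$ and $0$ are not separated below degree $2q$; hence $\bsep(G, V) \ge 2q = 2p^{r}$, and with the upper bound $\bsep(G) = 2p^{r}$.

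The step I expect to be most delicate is the even-degree case, specifically the verification that no $G$-orbit of exponent vectors has size exactly $2$; this rests on the fact that the Sylow $p$-subgroup $\langle\rho\rangle$ is normal and that its fixed exponent vectors (the constant ones) are automatically reflection-symmetric, which is the dihedral analogue of the transitivity input used in Propositions~\ref{betapGroup} and~\ref{betaCyclicGroup}. The genuinely new ingredient compared with those propositions is the parity argument, which is why the twist $\sigma v_{i} = -v_{-i}$ rather than the naive reflection action $\sigma v_{i} = v_{-i}$ is essential: without it $w$ is $\sigma$-fixed, $e$ itself is invariant, and one only recovers the weaker bound $p^{r}$.
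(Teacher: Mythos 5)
Your proposal is correct and follows essentially the same route as the paper: the same sign-twisted permutation module with $\rho v_i=v_{i+1}$, $\sigma v_i=-v_{-i}$, the same test point $w=\sum_i v_i$ (versus $0$), the same parity trick exploiting $\sigma w=-w$, and the same $p$-group orbit-sum evaluation forcing a surviving monomial to be an even power of $x_0\cdots x_{q-1}$. The only cosmetic difference is that you split by parity of the degree and use full $G$-orbit sums (ruling out orbits of size $2$ via the uniqueness of the index-$2$ subgroup), whereas the paper works uniformly with $\langle\rho\rangle$-orbit sums $s_m$ and the identity $(s_m+\sigma s_m)(w)=(1+(-1)^{\deg m})s_m(w)$.
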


Note that if $\Char(K)=p=2$, then $D_{2p^{r}}$ is a $2$-group, so
$\bsep(D_{2^{r+1}})=2^{r+1}$ by Proposition~\ref{betapGroup}. We conjecture that for $\Char(K)=2$ and $p$ an odd prime, we have $\bsep(D_{2p})=p+1$, which would fit with Proposition~\ref{MagmaBoundS3}.

\begin{proof}
Put $q=p^{r}$ and define a linear action of $D_{2p^{r}}$ on $V:=\bigoplus_{i=0}^{q-1} Kv_{i}$ by
$$
\rho v_{i}= v_{i+1} \text{ and } \sigma v_{i}= -v_{-i} \text{ for } i=0,1,\ldots,q-1
$$
where $v_{j}=v_{i}$ if $j\equiv i \!\mod q$ for $i,j\in\ZZ$. As before, the invariants under
$H:=\langle \rho \rangle$ are linearly spanned by the orbit sums
$s_{m}:=\sum_{f\in Hm}f$ of the monomials $m=x_{0}^{i_{0}}x_{1}^{i_{1}}\cdots
x_{q-1}^{i_{q-1}} \in \OOO(V) = K[x_{0},x_{1},\ldots,x_{q-1}]$. 
%Notice that $\sigma s_{m}=(-1)^{\deg m}s_{(-1)^{\deg m}\sigma m}$.
Thus, the $D_{2p^{r}}$-invariants are linearly spanned by the functions
$\{s_{m}+ \sigma s_{m}\mid m \text{ a monomial}\}$.

For  $v:=v_{0}+ v_{1}+ \cdots + v_{q-1}$ we get $\sigma
s_{m}(v)=s_{m}(\sigma v)=(-1)^{\deg m}s_{m}(v)$. Therefore, $s_{m}+ \sigma s_{m}$
is non-zero on $v$ if and only if $s_{m}(v)\neq 0$ and the degree of $m$ is
even. As in the proof of Proposition~\ref{betaCyclicGroup},  $s_{m}(v)\ne 0$ implies
that $m$ is a power of $x_{0}x_{1}\cdots x_{q-1}$ which has to be an even power since
$q$ is odd. Thus, for $m:=(x_{0}x_{1}\cdots x_{q-1})^{2}$, $s_{m}+ \sigma s_{m}= 2m$ is an invariant of smallest possible degree, namely $2q$, which does not vanish on $v$.
\end{proof}

Let $I_{H}:=\OOO(V)^{G}_{+}\OOO(V)$ denote the \emph{Hilbert-ideal}, i.e. the ideal
in $\OOO(V)$ generated by all homogeneous invariants of positive degree. It is
conjectured by \name{Derksen}  and \name{Kemper}  that 
$I_{H}$ is generated by invariants of positive degree $\leq |G|$, see \cite[Conjecture 3.8.6 (b)]{Derksen2002Computational-i}. The following corollary shows that this conjectured bound can not be sharpened in general.

\begin{cor}
Let $\Char K=p$ and $G$ a $p$-group (with $p>0$), or a cyclic group, or $G=D_{2p^{r}}$ with
$p$ odd. Then there exists a $G$-module $V$ such that $I_{H}$ is not
generated by homogeneous invariants of positive degree strictly less than $|G|$. 
\end{cor}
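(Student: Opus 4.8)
The plan is to recycle, with no new construction, the modules $V$ and the distinguished points $v$ that already appear in the proofs of Propositions~\ref{betapGroup}, \ref{betaCyclicGroup} and \ref{Dihedral2pr}. In each of those three proofs one fixes, for the group $G$ in question, a specific $G$-module $V$ together with a point $v\in V$ (namely the sum $v=v_{1}+\cdots+v_{n}$ of the basis vectors involved in the orbit analysis, resp. $v=v_{0}+\cdots+v_{q-1}$ in the dihedral case). What I would extract from those proofs are two complementary facts about the pair $(V,v)$: first, \emph{every} homogeneous invariant of positive degree strictly less than $|G|$ vanishes at $v$; second, there is a homogeneous invariant of degree exactly $|G|$ that does \emph{not} vanish at $v$. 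The corollary then follows by a short formal argument.

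For the first fact I would argue as follows. In each of the three propositions it is shown that $\OOO(V)^{G}$ is linearly spanned by explicit invariants — the orbit sums $s_{m}$ in the $p$-group and cyclic cases, and the combinations $s_{m}+\sigma s_{m}$ in the dihedral case — and that such a spanning element is nonzero at $v$ precisely when the underlying monomial $m$ is invariant under the relevant subgroup (subject, in the cyclic and dihedral cases, to the divisibility/parity constraint on $\deg m$). Since a positive-degree invariant monomial nonvanishing at $v$ forces $\deg m\geq |G|$ (degree $n$, $rq$, resp. $2q$), any invariant of positive degree $<|G|$ is a linear combination of spanning invariants each of which already vanishes at $v$, hence vanishes at $v$ itself. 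For the second fact I would just point to the explicit invariant of degree $|G|$ produced in each proof — essentially a power of the product $x_{1}\cdots x_{n}$ — which evaluates to $1$ (or to $2$ in the dihedral case) at $v$, a nonzero scalar in the respective characteristic.

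With these two facts in hand the conclusion is immediate. Let $J\subseteq\OOO(V)$ be the ideal generated by all homogeneous invariants of positive degree $<|G|$, and let $\mathfrak{m}_{v}=\{h\in\OOO(V)\mid h(v)=0\}$ be the maximal ideal of functions vanishing at $v$. By the first fact every generator of $J$ lies in $\mathfrak{m}_{v}$, so $J\subseteq\mathfrak{m}_{v}$. By the second fact the Hilbert ideal $I_{H}$ contains a positive-degree invariant $f$ with $f(v)\neq 0$, so $f\notin\mathfrak{m}_{v}$ and a fortiori $f\notin J$. Since $f\in I_{H}$, this gives $I_{H}\neq J$, which is exactly the assertion that $I_{H}$ cannot be generated by homogeneous invariants of positive degree strictly less than $|G|$.

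I do not expect a genuine obstacle: the mathematical content is already entirely contained in the three preceding propositions, and only the bookkeeping step ``all low-degree invariants vanish at $v$'' needs to be recorded. The one point meriting a line of care is that I must invoke that the \emph{full} invariant ring (not merely a separating set) is spanned by those orbit sums, so that the containment $J\subseteq\mathfrak{m}_{v}$ really accounts for all low-degree positive-degree invariants and not just a separating subfamily; this is precisely what the cited proofs establish, so no extra work is required.
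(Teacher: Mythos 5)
Your proposal is correct and follows essentially the same route as the paper: reuse the pairs $(V,v)$ from Propositions~\ref{betapGroup}, \ref{betaCyclicGroup} and \ref{Dihedral2pr}, note that every homogeneous positive-degree invariant of degree $<|G|$ vanishes at $v$ while some degree-$|G|$ invariant does not, and conclude that this invariant lies in $I_{H}$ but not in the ideal generated by the lower-degree invariants. Your explicit bookkeeping with $J\subseteq\mathfrak{m}_{v}$ is just a spelled-out version of the paper's one-line conclusion $f\notin\OOO(V)^{G}_{+,<|G|}\OOO(V)$.
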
 

\begin{proof} In the proofs of the Propositions~\ref{betapGroup},
\ref{betaCyclicGroup} and \ref{Dihedral2pr} respectively, we constructed a
$G$-module $V$ and a non-zero $v\in V$ such that $f(v)=0$ for all homogeneous $f\in
\OOO(V)^{G}$ of positive degree strictly less than $|G|$, but such that there
exists a homogeneous $f\in \OOO(V)^{G}$ of degree $|G|$ with $f(v)\ne 0$. This shows
that $f \notin \OOO(V)^{G}_{+,<|G|}\OOO(V)$. \end{proof}

Now we use relative degree bounds for separating
invariants and good degree bounds for generating invariants of non-modular
groups, that appear as a subquotient, to get improved degree bounds for
separating invariants in the modular case.

\begin{prop}\lab{SezerAnalog}
Let $\Char K=p$ and $G$ be a finite group. Assume there exists a chain of
subgroups $N\subset H\subset G$ such that $N$ is a normal subgroup of $H$ and such that $H/N$
is non-cyclic of order $s$ coprime to $p$. Then \[\bsep(G)\le \left\{\begin{array}{cc}
\frac{3}{4}|G| & \text{ in case } s \text{ is even}\\
\frac{5}{8}|G| & \text{ in case } s \text{ is odd.}
\end{array}\right.\]
\end{prop}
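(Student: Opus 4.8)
The plan is to combine the multiplicative relative degree bounds of Theorem~\ref{finitegroups.thm} with the sharpened Noether bounds for generating invariants of the non-modular non-cyclic subquotient $H/N$. Write $Q:=H/N$; by hypothesis $Q$ is non-cyclic of order $s$ with $(s,p)=1$, so $Q$ is a non-modular group. The whole statement will turn out to be a bookkeeping consequence of results already available, once one notices that the contribution of $Q$ cancels against an index.

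First I would descend along the chain $N\subset H\subset G$ using only Theorem~\ref{finitegroups.thm}. The subgroup inequality gives $\bsep(G)\le [G:H]\,\bsep(H)$, and since $N$ is normal in $H$ the normal-quotient inequality gives $\bsep(H)\le \bsep(Q)\,\bsep(N)$. Combining these yields
\[
\bsep(G)\le [G:H]\,\bsep(Q)\,\bsep(N).
\]
Next I would estimate the two remaining factors separately. For $N$ I use only the universal bound $\bsep(N)\le |N|$. For $Q$ I use $\bsep(Q)\le \beta(Q)$ together with the sharpened Noether bound for non-modular non-cyclic groups (\cite{SezerNoether}, see also \cite{DomokosHegedus}): one has $\beta(Q)\le \tfrac34 s$ in general, and $\beta(Q)\le \tfrac58 s$ when $s$ is odd. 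Substituting and invoking the arithmetic identity $[G:H]\,|N|=|G|/s$, which follows from $|G|=[G:H]\,|H|$ and $|H|=s\,|N|$, the factor $s$ cancels and one obtains $\bsep(G)\le \tfrac34|G|$ when $s$ is even and $\bsep(G)\le \tfrac58|G|$ when $s$ is odd.

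Since the argument is a combination of results already established in the paper (Theorem~\ref{finitegroups.thm} and the universal bound $\bsep\le|\cdot|$) with a cited generating-invariant bound, there is no serious analytic obstacle. The one point requiring care, and the genuine external input, is the sharpened Noether bound $\beta(Q)\le\tfrac34|Q|$ (respectively $\tfrac58|Q|$ for odd order): I would verify that its hypotheses are met, namely that $Q=H/N$ is indeed non-cyclic and that $|Q|$ is coprime to $\Char K$, both of which are guaranteed by the assumption on the chain $N\subset H\subset G$. It is precisely the passage to the quotient $H/N$ (rather than to $H$ or $N$ directly) that makes $Q$ non-modular and thus accessible to the classical bounds.
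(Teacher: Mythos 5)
Your proposal is correct and follows essentially the same route as the paper: chain the two inequalities of Theorem~\ref{finitegroups.thm} to get $\bsep(G)\le[G:H]\,\bsep(H/N)\,\bsep(N)$, then bound $\bsep(N)\le|N|$ and $\bsep(H/N)\le\beta(H/N)$ via the Sezer bound, with the factor $s$ cancelling against $[G:H]\,|N|=|G|/s$. Nothing to add.
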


\begin{proof} By \name{Sezer} \cite{SezerNoether}, for a non-cyclic non-modular group $U$, we
have $\beta(U)\le \frac{3}{4}|U|$ in case $|U|$ is even, and $\beta(U)\le
\frac{5}{8}|U|$ in case $|U|$ is odd. We now assume $s$ is even; the other
case is essentially the same. Since $\bsep(U)\le\beta(U)$ always
holds, we get by using Theorem~\ref{finitegroups.thm}
\begin{eqnarray*}
\bsep(G) \leq \bsep(H)[G:H]\leq \bsep(N)\bsep(H/N)[G:H]\\
\leq \beta(H/N)[G:H]|N|\leq
\frac{3}{4}[H:N][G:H]|N|=\frac{3}{4}|G|.
\end{eqnarray*}
\end{proof}

\begin{exa}
Assume $p=3$ and $G=A_{4}$. The Klein four group is a non-cyclic non-modular
subgroup of even order. We get $\bsep(A_{4})\le
\frac{3}{4}|A_{4}|=9$. Application of Theorem~\ref{finitegroups.thm}
shows $\bsep(A_{4}\times A_{4})\le \bsep(A_{4})^{2}\le 81$.
\end{exa}

\begin{exa}
Let $D_{2n}$ be the dihedral group of order $2n$. We know
$n\le\bsep(D_{2n})$ by Corollary \ref{OrderDegBound}. Assume $\Char K=p\ne 2$ and
$n=p^{r}m$ with $p,m$ coprime and $m>1$. Then $D_{2n}$ has the non-cyclic
subgroup $D_{2m}$ of even order, so $\bsep(D_{2n})\le
\frac{3}{4}2n=\frac{3}{2}n$. So the only dihedral groups, to which the proposition
above does not apply, are those of the form $D_{2p^{r}}$, which are covered
by Proposition~\ref{Dihedral2pr}. 
\end{exa}

We end this section with two questions:

\begin{ques}
Which finite groups $G$ satisfy $\bsep(G)=|G|$?
\end{ques}

\begin{ques}
Which finite groups $G$ do not have a non-cyclic non-modular subquotient?
\end{ques}

The dihedral groups of  Proposition~\ref{Dihedral2pr} satisfy this
property, and we get $\bsep(G)=|G|$ for those groups. But in characteristic $2$,
$\bsep(S_{3})<|S_{3}|$ by Proposition~\ref{MagmaBoundS3}, so the answer to the
second question only partially helps to solve the first one.

%%%%%%%%%%%%%%%%%%
\par\bigskip\bigskip
%\bibliography{/Users/hkraft/Documents/Literatur/HP-Bib}
\bibliography{KohlsKraft}
\bibliographystyle{amsalpha}

\end{document}